\numberwithin{equation}{section}
\newtheorem{theorem}{Theorem}[section]
\newtheorem{lemma}[theorem]{Lemma}
\newtheorem{proposition}[theorem]{Proposition}
\theoremstyle{definition}
\newtheorem{definition}[theorem]{Definition}
\theoremstyle{remark}
\newtheorem{remark}[theorem]{Remark}
\newcommand{\del}{\partial}
\newcommand{\delbar}{\overline{\partial}}
\newcommand{\ddbar}{\del\delbar}
\newcommand{\vp}{\varphi}
\newcommand{\ve}{\varepsilon}
\newcommand{\bS}{\mathbb{S}}
\DeclareMathOperator{\rM}{M}
\newcommand{\abs}[1]{\left\vert #1 \right\vert}
\newcommand{\set}[1]{\left\{ #1 \right\}}
\newcommand{\norm}[1]{\left\Vert #1 \right\Vert}
\newcommand{\romfigure}[1]{\MakeUppercase{\romannumeral #1}}
\title{A gluing construction of projective K3 surfaces}
\author{Takayuki Koike}
\address{Department of Mathematics, Graduate School of Science, Osaka City University, 3-3-138 Sugimoto, Osaka 558-8585, Japan}
\email{tkoike@osaka-cu.ac.jp}
\author{Takato Uehara}
\address{Department of Mathematics, Faculty of Science, Okayama University, 1-1-1, Tsushimanaka, Okayama, 700-8530, Japan}
\email{takaue@okayama-u.ac.jp}
\begin{document}



\maketitle

\begin{prelims}

\DisplayAbstractInEnglish

\bigskip

\DisplayKeyWords

\medskip

\DisplayMSCclass







\end{prelims}


\newpage

\setcounter{tocdepth}{1}

\tableofcontents


\section{Introduction}\label{section:introduction}

In the paper \cite{KU}, we gave a method, the so-called {\it gluing method}, for constructing a family of K3 surfaces, that is, 
we constructed such a K3 surface by holomorphically gluing two open complex surfaces obtained as the complements of tubular neighborhoods of elliptic curves embedded in blow-ups of the projective planes at nine points. 
The family has complex dimension $19$ and each K3 surface of the family admits compact Levi-flat hypersurfaces. 
In this paper, we will show that there are {\it projective} K3 surfaces among the family. 
One of the main results is given as follows: 

\begin{theorem}\label{thm:main}
There exists a deformation $\pi\colon\mathcal{X}\to B$ of projective K3 surfaces over an {$18$ dimensional} complex manifold $B$ with injective Kodaira-Spencer map 
such that each fiber $X_b:=\pi^{-1}(b)$ admits a holomorphic immersion $F_b\colon \mathbb{C}\to X_b$ with the property that  
the Euclidean closure of the image $F_b(\mathbb{C})$ in $X_b$ is a compact real analytic hypersurface $C^\omega$-diffeomorphic to a real $3$-dimensional torus $\bS^1\times \bS^1\times \bS^1$ which is Levi-flat. 
Especially, $F_b(\mathbb{C})$ is Zariski dense in $X_b$ whereas it is not Euclidean dense. 
Moreover, $X_b$ is non-Kummer for almost every $b\in B$ in the sense of the Lebesgue measure. 
\end{theorem}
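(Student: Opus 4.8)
The plan is to build on the gluing construction of \cite{KU}, showing that after imposing a single extra condition on the gluing data one lands in an $18$-dimensional subfamily of \emph{projective} K3 surfaces, and then to read off the remaining assertions from the geometry of the construction together with lattice theory. Throughout, write $S_1,S_2$ for the two blow-ups of $\mathbb{P}^2$ at nine points, $C$ for the common anticanonical elliptic curve, and $X_b=U_1\cup U_2$ for the surface obtained by deleting tubular neighborhoods of $C$ from the $S_i$ and holomorphically patching the complements $U_i$. The Kähler property of $X_b$, the holomorphic immersion $F_b\colon\mathbb{C}\to X_b$, and the fact that its Euclidean closure is a Levi-flat real $3$-torus are exactly the output of the construction in \cite{KU}; only the projectivity, the dimension and non-degeneracy of the base, and the non-Kummer property are genuinely new, and these are what I would prove here.

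For projectivity I would produce line bundles on $X_b$ by gluing line bundles on the two sides. The construction of \cite{KU} already endows the generic member with a rank-one N\'eron--Severi lattice generated by an isotropic class $f$ with $f^2=0$ (the class underlying the elliptic/Levi-flat structure), which is why those surfaces are non-projective. A further pair $(L_1,L_2)$, $L_i\in\mathrm{Pic}(S_i)$, descends to a holomorphic line bundle on $X_b$ exactly when $L_1$ and $L_2$ correspond along $C$ under the identification used in the gluing; since $\mathrm{Pic}(S_i)\to\mathrm{Pic}(C)$ sends the exceptional classes to the blow-up points read in the group law of $C$, this compatibility is one explicit complex condition on the two nine-point configurations. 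Imposing it so that a class $L$ with $f\cdot L\neq 0$ glues cuts exactly one dimension, producing the $18$-dimensional base $B$, and forces $\mathrm{NS}(X_b)$ to contain the rank-two sublattice $\langle f,L\rangle$, whose Gram determinant $-(f\cdot L)^2$ is negative. Being indefinite, this lattice contains classes of positive self-intersection, so $X_b$ is projective by the standard criterion that a K3 surface carrying a line bundle of positive self-intersection is algebraic. I expect this step to be the main obstacle, since realizing the gluing of $L$ \emph{holomorphically} — compatibly with the neighborhood structure of $C$ to all orders, and not merely formally or topologically — is exactly the delicate point controlled by the normal-bundle data of the construction.

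The injectivity of the Kodaira--Spencer map I would obtain from the period map via local Torelli: since $B$ is a smooth codimension-one submanifold of the parameter space of \cite{KU}, the period computation there restricts to show that the $18$ essential gluing parameters move the period of $X_b$ independently. For the density statements, note that the Euclidean closure of $F_b(\mathbb{C})$ is the Levi-flat $3$-torus, a proper real $3$-dimensional subset, so $F_b(\mathbb{C})$ is not Euclidean dense; on the other hand, were it contained in a proper Zariski-closed subset, that subset would be an algebraic curve and hence real $2$-dimensional, too small to contain the $3$-torus, so $F_b(\mathbb{C})$ must be Zariski dense.

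Finally, for the non-Kummer property I would argue on the level of periods and lattices. By construction $\mathrm{NS}(X_b)$ contains the rank-two lattice $N=\langle f,L\rangle$, and since the period map is an immersion, the image of $B$ is an $18$-dimensional piece of the period domain $\Omega_N$ of periods orthogonal to $N$; a very general point of $\Omega_N$ has Picard number exactly $2$, so for almost every $b$ the transcendental lattice of $X_b$ has rank $20$. A Kummer surface has Picard number at least $17$, hence any K3 surface with transcendental lattice of rank $20$ is non-Kummer; the exceptional set where the Picard number jumps is the preimage of the countable union of proper analytic subsets $\Omega_{N'}$, $N'\supsetneq N$, of $\Omega_N$, and is therefore of Lebesgue measure zero. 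This yields the non-Kummer conclusion for almost every $b\in B$.
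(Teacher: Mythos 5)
Your overall architecture tracks the paper's proof closely: the immersion $F_b$, the Levi-flat $3$-torus and the Kodaira--Spencer injectivity are quoted from \cite{KU}; projectivity comes from gluing line bundles $L^{\pm}$ subject to a compatibility condition along $C$ that costs exactly one parameter (in the paper this condition pins down the translation $\xi$ in $g_{\xi}$, leaving the $18$ parameters $\tau$, $p_1^{\pm},\dots,p_8^{\pm}$, $s$); and the non-Kummer statement follows from a countability/measure argument on the period domain forcing $\rho(X_b)\le 20-18=2<17$ for almost every $b$ (Proposition \ref{prop:K3_deform_picard}). Your Zariski-density argument is also fine. Two points, however, are genuinely problematic.

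First, your route to projectivity leans on an isotropic algebraic class $f$ with $f^2=0$ supposedly generating $\mathrm{NS}$ of the generic member of the $19$-dimensional family. No such class exists: the natural candidate $A_{\alpha\beta}$ (the torus cycle underlying the Levi-flat structure) satisfies $(\sigma\cdot A_{\alpha\beta})=\mu=q-p\tau\neq 0$ by the expression for the period $\sigma$ in Section \ref{sec:calculation}, so it is not of type $(1,1)$, and the leaves of the Levi-flat foliation are non-compact images of $\mathbb{C}$ carrying no class in $H_2$. Hence the indefinite lattice $\langle f,L\rangle$ is not available, and the same issue infects your description of the period domain $\Omega_N$ with $N=\langle f,L\rangle$. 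The argument is repairable without $f$: taking $L^{\pm}$ ample, Proposition \ref{prop:charnexpression} gives $c_1(L_s)^2=(L^+)^2+(L^-)^2>0$, and a K3 surface with an integral $(1,1)$-class of positive square is projective. The paper instead proves the stronger statement that $L_s$ is \emph{ample} for $0<|s|\ll1$, by explicitly gluing positively curved metrics $h_{\pm}e^{-c\psi^{\pm}}$ (Theorem \ref{thm:main1}(ii)); either route would do, but yours must be rewritten so as not to invoke $f$.

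Second, you correctly identify the holomorphic gluing of $L^+$ and $L^-$ as ``the main obstacle'' but leave it unresolved, and this is where the real work of the paper lies. The resolution (Lemma \ref{top_triv_lb} and Proposition \ref{prop:linebundleonW}) is that, under the Diophantine condition on $N_{C^{\pm}/S^{\pm}}$, every holomorphic line bundle on the linearized neighborhood $W^{\pm}$ equals the pullback $\pi^*(L|_{C^{\pm}})$ of its restriction to $C^{\pm}$; this is a Ueda-type argument solving $\delta$-equations coefficientwise in the $w$-expansion with small-divisor estimates coming from $d(\mathbb{I}_C,N^n)\ge A n^{-\alpha}$. Only with this input does the matching of $L^+|_{C^+}$ with $g_{\xi}^*(L^-|_{C^-})$ become \emph{sufficient} (and not merely necessary) for the bundles to glue over the annular overlap $V_s^{\pm}$; as written, your ``one explicit complex condition'' is asserted rather than shown to suffice.
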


In the construction of  K3 surfaces given in the paper \cite{KU}, we prepare two surfaces $S^+$ and $S^-$ obtained from the blow-ups of 
the projective plane $\mathbb{P}^2$ at nine points $\{p_1^{\pm}, \ldots, p_9^{\pm}\}$ with smooth elliptic curves $C^{\pm} \in |K_{S^\pm}^{-1}|$. 
Here we assume that $(S^{\pm},C^{\pm})$ satisfy the following two conditions:
\begin{enumerate}[label=(\alph*)]
\item there exists an isomorphism $g : C^+ \to C^-$ such that $g^*N_- \cong N_+$, where $N_\pm:=N_{C^\pm/S^\pm}$ are the normal bundles of $C^{\pm}$ in $S^{\pm}$, and 
\item the normal bundles $N_\pm \in \mathrm{Pic}^0(C^{\pm})$ satisfy the Diophantine condition (see Definition~\ref{def:diophnormal}). 
\end{enumerate}
Then Arnol'd's theorem \cite{A} guarantees that there exist {\it analytically linearizable neighborhoods} $W^{\pm} \subset S^{\pm}$ of $C^{\pm}$ in $S^{\pm}$, namely, 
$W^{\pm}$ are tubular neighborhoods of $C^{\pm}$ in $S^{\pm}$ which are biholomorphic to neighborhoods of the zero sections in $N_{\pm}$. 
In other words, there exist a pair $(p,q) \in \mathbb{R}^2$ that satisfies the Diophantine condition (see Definition \ref{def:diophpair}) and a positive real number $R>1$ such that 
$W^{\pm}$ are expressed as 
\begin{equation} \label{eqn:isomorphism}
W^\pm \cong \set{(z^\pm, w^\pm)\in \mathbb{C}^2\mid \abs{w^\pm}<R }/\sim_\pm, 
\end{equation}
where $\sim_\pm$ are the equivalence relations generated by
\[
(z^\pm, w^\pm) \sim_\pm 
(z^\pm+1,\ \exp(\pm p\cdot 2\pi\sqrt{-1})\cdot w^\pm) \sim_\pm 
(z^\pm+\tau,\ \exp(\pm q\cdot 2\pi\sqrt{-1})\cdot w^\pm) 
\]
with $\tau\in \mathbb{H}:=\set{ \tau \in\mathbb{C}\mid {\rm Im}\,\tau>0}$ (here note that $C^+ \cong C^-$ via $g$). 
From now on, we fix $(p,q)$, $(S^{\pm},C^{\pm})$, $g$, and isomorphisms \eqref{eqn:isomorphism}. 

In the present paper, we take an appropriate $\xi \in \mathbb{C}$ and consider $g_\xi:=\ell_\xi\circ g$, where $\ell_{\xi} : C^- \cong \mathbb{C}/\langle 1,\tau \rangle \circlearrowleft$ is the translation induced from 
$\mathbb{C}\ni z\mapsto z+\xi\in \mathbb{C}$. Note that 
$g_\xi^*N_-\cong N_+$ remains true since $N_{\pm} \in \mathrm{Pic}^0(C^{\pm})$. 
For each $s \in \Delta:=\set{s\in\mathbb{C}\mid |s|<1}$ with $s \neq 0$, we define open submanifolds $M_s^\pm$ of $S^{\pm}$ by
\[
M_s^\pm := S^\pm\setminus \left\{[(z^\pm, w^\pm)]\in W^\pm\mid \abs{w^\pm}\leq \sqrt{\abs{s}}/R\right\}, 
\]
which contain
\[
V_s^\pm := \set{[(z^\pm, w^\pm)]\in W^\pm\mid \sqrt{|s|}/R<|w^\pm|< \sqrt{|s|}R }
\]
as neighborhoods of boundaries of $M_s^\pm$, and a biholomorphism $f_s\colon V_s^+\to V_s^-$ by 
\[
f_s\left([(z^+, w^+)]\right)=\left[(g_\xi(z^+),s/w^+)\right]. 
\]
Then by identifying $V_s^+$ and $V_s^-$ via the biholomorphic map $f_s$, we can patch $M_s^+$ and $M_s^-$ to define a compact complex surface $X_s$. 
In the paper  \cite{KU}, we showed that $X_s$ is a K3 surface and that the nowhere vanishing holomorphic $2$-form $\sigma_s$ on $X_s$ satisfies 
\[
\sigma_s|_{V_s}=c\cdot \frac{dz\wedge dw}{w}
\]
for some $c \in \mathbb{C}^*$, where $V_s \subset X_s$ is the open submanifold corresponding to $V_s^+\cong V_s^-$ and $(z,w)$ are the coordinates induced from $(z^+,w^+)$. 

For each $\xi$, these K3 surfaces $X_s$ with $s \in \Delta \setminus \{0\}$ are the fibers of a proper holomorphic map 
\[
\mathcal{X}\to \Delta
\]
from a smooth complex manifold $\mathcal{X}(=\mathcal{X}(\xi))$ such that 
\begin{itemize}
\item[---] each fiber over $s \in \Delta \setminus \{0\}$ coincides with the K3 surface $X_s$, 
\item[---] the fiber $X_0$ over $0 \in \Delta$ is a compact complex variety with normal crossing singularities whose irreducible components are $S^+$ and $S^-$ and whose singular part is the one obtained by identifying $C^+$ and $C^-$ via $g_{\xi}$, and thus
\item[---] $\mathcal{X}\to \Delta$ is a type \romfigure{2} degeneration of K3 surfaces (see Section~\ref{section:proof_main_i}). 
\end{itemize}

We notice that $V_s \subset X_s$ is biholomorphic to a topologically trivial annulus bundle over the elliptic curve $C:=C^+ \cong C^-$, 
and hence homotopic to $\bS_{\alpha}^1 \times \bS_{\beta}^1 \times \bS_{\gamma}^1$, where $\bS_{\alpha}^1$ and $\bS_{\beta}^1$ are circles in $V_s$ such that $\bS_{\alpha}^1 \times \bS_{\beta}^1$ is a $C^\infty$ section of the bundle, 
and $\bS_{\gamma}^1$ is a circle in a fiber of the bundle which generates the fundamental group. Then we define the $2$-cycles $A_{\alpha\beta}$, $A_{\beta\gamma}$, $A_{\gamma\alpha}$ by
\[
A_{\alpha\beta}=\bS_{\alpha}^1 \times \bS_{\beta}^1, \quad A_{\beta\gamma}=\bS_{\beta}^1 \times \bS_{\gamma}^1, \quad\text{and}\quad  A_{\gamma\alpha}=\bS_{\gamma}^1 \times \bS_{\alpha}^1. 
\]
In addition to the $2$-cycles $A_{\alpha\beta}$, $A_{\beta\gamma}$, $A_{\gamma\alpha}$, 
each K3 surface $X_s$ admits a marking, which gives $22$ generators of the second homology group $H_2(X_s, \mathbb{Z})$ denoted by 
\begin{equation} \label{eqn:generator}
A_{\alpha\beta},\ A_{\beta\gamma},\ A_{\gamma\alpha},\ B_\alpha,\ B_\beta,\ B_\gamma,\ C_{12}^+,\ C_{23}^+,\ \ldots, C_{78}^+,\ C_{678}^+,\ C_{12}^-,\ C_{23}^-,\ \ldots,\ C_{78}^-,\ C_{678}^-.
\end{equation}
In \S \ref{sec:calculation}, we will give the definitions of these generators. 

Now let $L^{\pm}$ be holomorphic line bundles on $S^{\pm}$ with $(L^+\cdot C^+)=(L^-\cdot C^-)$. 
Assume that there exists $\xi \in \mathbb{C}$ such that $g_\xi^*\left(L^-|_{C^-}\right)\cong L^+|_{C^+}$. 
Note that such a $\xi$ always exists when $(L^+\cdot C^+)=(L^-\cdot C^-)\not=0$. 
We fix such a $\xi \in \mathbb{C}$, and consider the deformation family $\mathcal{X}\to \Delta$. 

\begin{theorem}\label{thm:main1}
Under the above setting, we have the following. 
\begin{enumerate}[label=(\roman*)]
\item For any $s \in \Delta$, the line bundles $L^+|_{M_s^+}$ and $L^-|_{M_s^-}$ glue to define a holomorphic line bundle $L_s=L^+ \vee L^-$ on $X_s$. 
Moreover there exists a holomorphic line bundle $\mathcal{L}\to \mathcal{X}$ such that $\mathcal{L}|_{X_s}=L_s$ for each $s \in \Delta$.
\item If $L^\pm$ are ample, then there exists $\ve_0>0$ such that $L_s$ is ample for any $s \in \Delta$ with $0<|s|<\ve_0$.
\item Let $L$ be a holomorphic line bundle on $X_s$ for some $s \in \Delta \setminus \{0 \}$. Then the following are equivalent.
\begin{enumerate}
\item There exist line bundles $L^{\pm}$ on $S^{\pm}$ with $(L^+\cdot C^+)=(L^-\cdot C^-)$ such that $L=L^+\vee L^-$.
\item There exists a line bundle $\mathcal{L}\to \mathcal{X}$ such that $L=\mathcal{L}|_{X_s}$. 
\item $(L\cdot A_{\beta\gamma})=(L\cdot A_{\gamma\alpha})=0$. 
\end{enumerate}
\end{enumerate}
\end{theorem}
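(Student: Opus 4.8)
The plan is to treat the three parts separately, relying on the explicit gluing data \eqref{eqn:isomorphism} and the biholomorphism $f_s$, and to isolate the implication (c)$\Rightarrow$(a) in part (iii) as the essential point.

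For part (i), the key observation is that the two candidate bundles agree on the overlap. Since $W^{\pm}$ are analytically linearized tubular neighborhoods of $C^{\pm}$, the restriction $L^{\pm}|_{W^{\pm}}$ is, up to a flat factor, the pullback of $L^{\pm}|_{C^{\pm}}$ under the bundle projection $W^{\pm}\to C^{\pm}$. On $V_s^{+}$ I would compare $L^{+}|_{V_s^{+}}$ with $f_s^{*}(L^{-}|_{V_s^{-}})$: because $f_s$ covers $g_{\xi}$ on the base elliptic curve and the hypothesis gives $g_{\xi}^{*}(L^{-}|_{C^{-}})\cong L^{+}|_{C^{+}}$, these two line bundles on the annulus bundle $V_s$ have the same restriction to the base, and since the fibers are annuli there is no further obstruction to their being isomorphic. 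A choice of such an isomorphism furnishes the gluing $L_s=L^{+}\vee L^{-}$. As every step is uniform in $s\in\Delta$ and the matching persists at $s=0$ on the normal crossings fiber $X_0=S^{+}\cup_C S^{-}$, the same \v{C}ech data glue to a line bundle $\mathcal{L}\to\mathcal{X}$ with $\mathcal{L}|_{X_s}=L_s$.

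For part (ii), I would argue by specialization. The restriction $\mathcal{L}|_{X_0}$ is the bundle on $X_0$ whose restriction to each component $S^{\pm}$ is $L^{\pm}$; since a line bundle on a normal crossings variety is ample precisely when its restriction to every irreducible component is ample, the ampleness of $L^{\pm}$ makes $\mathcal{L}|_{X_0}$ ample. Ampleness being an open condition in a proper family, the locus $\set{s\in\Delta \mid L_s \text{ ample}}$ is open and contains $0$, which yields the desired $\ve_0$. For part (iii), (a)$\Rightarrow$(b) is part (i), while (b)$\Rightarrow$(a) follows by restricting $\mathcal{L}$ to the two components of $X_0$ and setting $L^{\pm}:=\mathcal{L}|_{S^{\pm}}$: agreement along $C$ forces $L^{+}|_{C^{+}}\cong g_{\xi}^{*}(L^{-}|_{C^{-}})$ and hence $(L^{+}\cdot C^{+})=(L^{-}\cdot C^{-})$. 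The implication (a)$\Rightarrow$(c) is a topological computation, since a glued bundle has $c_1(L)|_{V_s}$ equal to the pullback of $c_1(L^{\pm}|_{C^{\pm}})$ under $V_s\to C$, and the pullback of a $2$-form from the base integrates to zero over the vertical cycles $A_{\beta\gamma}=\bS_{\beta}^1\times\bS_{\gamma}^1$ and $A_{\gamma\alpha}=\bS_{\gamma}^1\times\bS_{\alpha}^1$, both of which contain the fiber circle $\bS_{\gamma}^1$.

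The main obstacle is (c)$\Rightarrow$(a). Since $V_s$ is homotopy equivalent to the $3$-torus $\bS_{\alpha}^1\times\bS_{\beta}^1\times\bS_{\gamma}^1$, the group $H^2(V_s,\mathbb{Z})\cong\mathbb{Z}^3$ is spanned by the classes dual to $A_{\alpha\beta},A_{\beta\gamma},A_{\gamma\alpha}$, so the hypothesis $(L\cdot A_{\beta\gamma})=(L\cdot A_{\gamma\alpha})=0$ forces $c_1(L|_{V_s})$ to be a multiple of the class dual to $A_{\alpha\beta}$, i.e. pulled back from the base $C$. I would then upgrade this fiberwise topological triviality to a holomorphic one: each fiber being an annulus has trivial Picard group, so $L|_{V_s}$ is, up to a flat factor in $\mathrm{Pic}^0(C)$, the pullback of a line bundle on $C$, and this lets $L|_{M_s^{\pm}}$ be extended across the removed disk bundles to line bundles $L^{\pm}$ on $S^{\pm}$. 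The delicate step is matching the holomorphic, not merely topological, structure on the overlap: one must show that the remaining flat ambiguity in $\mathrm{Pic}^0(C)$ is exactly absorbed by the equality $g_{\xi}^{*}(L^{-}|_{C^{-}})\cong L^{+}|_{C^{+}}$, so that the extensions $L^{\pm}$ reglue to the given $L$ rather than to a flat twist of it. Verifying this compatibility, rather than the homological splitting, is where the real work lies.
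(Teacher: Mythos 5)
The central gap is the step, used both in your part (i) and in your (c)$\Rightarrow$(a) for part (iii), where you pass from information over the base curve $C$ to an isomorphism of line bundles over the annulus bundle $V_s$ (or over $W^\pm$) on the grounds that ``the fibers are annuli'' and hence ``there is no further obstruction,'' i.e.\ that $\mathrm{Pic}^0$ of the annulus bundle reduces to $\mathrm{Pic}^0(C)$. This inference is false in general: $V_s$ is an open piece of the toroidal group $U_{\tau,(p,q)}$, and for a non-torsion pair $(p,q)$ that does \emph{not} satisfy the Diophantine condition there exist topologically trivial holomorphic line bundles on $U_{\tau,(p,q)}$ (hence on $V$) that are not pulled back from $C$; the paper records exactly this in the Remark following Lemma~\ref{top_triv_lb}, citing \cite{AK}. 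The obstruction is analytic, not topological or fiberwise: one solves $\delta$-equations coefficient-by-coefficient in the power (or Laurent) expansion in $w$, and the convergence of the resulting series is controlled by the small divisors $d(\mathbb{I}_C, N_{C/W}^n)$ via Ueda's lemma --- this is the content of Lemma~\ref{top_triv_lb} and Proposition~\ref{prop:linebundleonW}. For (c)$\Rightarrow$(a) the paper correspondingly invokes the Abe--Kopfermann classification of line bundles on Diophantine toroidal groups as theta bundles $L_{H,\rho}$ (Proposition~\ref{prop:thetabdle}) together with the curvature computation of Lemma~\ref{lem:intersection}, which shows that the vanishing of $(L\cdot A_{\beta\gamma})$ and $(L\cdot A_{\gamma\alpha})$ forces the off-diagonal entry $b$ of $H$ to vanish, whence $L|_{V_s}$ is a pullback and extends across $W^\pm$. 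You correctly sense that ``the real work'' lies in upgrading topological to holomorphic information, but you locate it in absorbing a flat ambiguity in $\mathrm{Pic}^0(C)$, whereas the genuine difficulty is that without the Diophantine input the ambiguity need not live in $\mathrm{Pic}^0(C)$ at all.

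Two smaller points. In (iii) your route (b)$\Rightarrow$(a) sets $L^\pm:=\mathcal{L}|_{S^\pm}$ on the central fiber, but this does not yet show that the given bundle $L=\mathcal{L}|_{X_s}$ on the \emph{nearby} fiber equals $L^+\vee L^-$: these live on different complex manifolds, and $\mathrm{Pic}(M_s^\pm)$ is large, so the identification requires an argument. The paper sidesteps this by proving (b)$\Rightarrow$(c) instead, using that $t\mapsto(\mathcal{L}|_{M_t^+}\cdot A_{\beta\gamma})$ is a continuous, hence constant, integer-valued function vanishing at $t=0$, and then closing the loop through (c)$\Rightarrow$(a). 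Finally, your part (ii) takes a genuinely different route: you specialize to the normal-crossing fiber $X_0$ and appeal to openness of ampleness, while the paper constructs an explicit positively curved metric on $L_s$ via regularized maxima and a weight behaving like $(\log(\abs{w}^2/\abs{s}))^2$ across the neck. Your approach can be made rigorous (Serre vanishing on $X_0$, base change, properness), but it requires first establishing that $X_0$ is a projective scheme on which $\mathcal{L}|_{X_0}$ is ample; the paper's metric construction avoids this foundational detour and in addition produces the complete K\"ahler forms $\omega^\pm$ on $S^\pm\setminus C^\pm$ with explicit behavior near the neck.
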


In our arguments it is important  to describe the line bundles on $V_s^{\pm}$ and on $W^{\pm}$, which is given in Section~\ref{sec:linebundles} after preliminary studies in Section~\ref{sec:preliminaries}. 
Then we will prove the main theorems in Section~\ref{sec:proofs}. 
Moreover, we will determine the Chern class $c_1(L_s)$ of the line bundle $L_s$ in terms of the marking \eqref{eqn:generator} in Section~\ref{sec:calculation}.

\subsection*{Acknowledgments} 
The authors would like to give special thanks to Prof. Takeo Ohsawa and Prof. Yuji Odaka whose enormous supports and insightful comments were invaluable during the course of their study. 


\section{Preliminaries} \label{sec:preliminaries}

\subsection{Neighborhoods of elliptic curves} \label{subsec:nbhdellcurve}

First we give the following definition. 

\begin{definition} \label{def:diophpair}
Let $(p,q) \in \mathbb{R}^2$ be a pair of real numbers. 
\begin{enumerate}
\item $(p,q)$ is called a {\it torsion pair} if $(p,q) \in \mathbb{Q}^2$. Otherwise, $(p,q)$ is called a {\it non-torsion pair}. 
\item $(p,q)$ is said to satisfy the {\it Diophantine condition} if there exist $\alpha>0$ and $A>0$ such that 
\[
\min_{\mu,\nu \in \mathbb{Z}} \abs{n(p+q \sqrt{-1})-(\mu+\nu \sqrt{-1})} \ge A \cdot n^{-\alpha}
\]
for any $n \in \mathbb{Z}_{>0}$.
\end{enumerate}
\end{definition}
Of course, if $(p,q)$ satisfies the Diophantine condition, then $(p,q)$ is a non-torsion pair. 

Let $X$ be a complex manifold. Denote by ${\rm Pic}(X)$ the Picard group of $X$, the group of isomorphism classes of holomorphic line bundles on $X$, 
and by ${\rm Pic}^0(X)$ the subgroup of ${\rm Pic}(X)$ consisting of (isomorphism classes of) topologically trivial line bundles. 
Note that $L \in{\rm Pic}(X)$ is topologically trivial if and only if $L$ satisfies $c_1(L)=0 \in H^2(X, \mathbb{Z})$, where $c_1(L)$ stands for the first Chern class of 
$L \in {\rm Pic}(X)$. 
If $X=C$ is a smooth elliptic curve, then any topologically trivial line bundle $L \in {\rm Pic}^0(C)$ admits a structure of unitary flat line bundle (see \cite{U}). 
In particular, the monodromy of $L \in {\rm Pic}^0(C)$ along any loop in $C$ is expressed as a complex number with modulus $1$. 

\begin{definition} \label{def:diophnormal}
For $\tau \in \mathbb{H}$, 
let $C=\mathbb{C}/ \langle 1,\tau \rangle$ be a smooth elliptic curve, 
and let $\alpha$ and $\beta$ be the loops in $C$ corresponding to the line segments $[0,1]$ and $[0,\tau]$, respectively. 
Then a topologically trivial line bundle $L \in {\rm Pic}^0(C)$ on $C$ is said to satisfy the {\it Diophantine condition} if so does the pair $(p,q) \in \mathbb{R}^2$, 
where $(p,q)$ is defined from $L$, that is, $\exp(p \cdot 2\pi \sqrt{-1})$ and $\exp(q \cdot 2\pi \sqrt{-1})$ are the monodromies of $L$ along the loops $\alpha$ and $\beta$, respectively. 
\end{definition}

Now, 
assume $C_0=\mathbb{C}/ \langle 1,\tau \rangle \subset \mathbb{P}^2$ is a smooth elliptic curve embedded in the projective plane $\mathbb{P}^2$. 
Let $Z:=\set{p_1,\ldots, p_9} \subset C_0$ be nine points on $C_0$, and $S:={\rm Bl}_Z\mathbb{P}^2$ be the blow-up of $\mathbb{P}^2$ at $Z$ 
with the strict transform $C$ of $C_0$. 
In this case, the normal bundle $N_{C/S} \in {\rm Pic}(C)$ of $C$ in $S$ is isomorphic to 
$\mathcal{O}_{\mathbb{P}^2}(3)|_{C_0} \otimes \mathcal{O}_{C_0}(-p_1-\dots-p_9) \in {\rm Pic}^0(C_0) \cong {\rm Pic}^0(C)$, 
and the pair $(p,q) \in \mathbb{R}^2$ defined from $L=N_{C/S}$ (see Definition~\ref{def:diophnormal}) is given by
\[
9 p_0-\sum_{j=1}^9 p_j=q-p \cdot \tau \quad \mathrm{mod}\quad \langle 1,\tau \rangle, 
\]
where $p_0$ is an inflection point of $C_0$. Moreover, if $N_{C/S} \in {\rm Pic}^0(C)$ satisfies the Diophantine condition, then Arnol'd's theorem \cite{A} 
guarantees that there exists a analytically linearizable neighborhood of $C$ in $S$, namely, 
a tubular neighborhood of $C$ in $S$ which is biholomorphic to a neighborhood of the zero section in $N_{C/S}$. 
In other words, there exists a neighborhood of $C$ in $S$ biholomorphic to 
\begin{equation} \label{eqn:nbhdW}
W := \set{(z, w)\in \mathbb{C}^2\mid \abs{w}<R }/\sim
\end{equation}
for some $R>1$, where $\sim$ is the equivalence relation generated by
\begin{equation} \label{eqn:relationVW}
(z, w) \sim
(z+1,\ \exp(p\cdot 2\pi\sqrt{-1})\cdot w) \sim
(z+\tau,\ \exp(q\cdot 2\pi\sqrt{-1})\cdot w). 
\end{equation}
With the neighborhood $W$ at hand, we can construct a family of K3 surfaces as mentioned in the introduction. 

\begin{remark} \label{rem:leviflat}
For a given $w_0 \in \mathbb{C}$ with $0<\abs{w_0} <R$, let $F : \mathbb{C} \to W \subset S$ be a holomorphic map defined by $F(z)=[(z, w_0)]$. 
Since $(p,q)$ satisfies the Diophantine condition, the Euclidean closure of $F(\mathbb{C})$ in $S$ coincides with 
$\set{[(z, w)] \mid \abs{w}=\abs{w_0}} \subset W$, which is a real analytic hypersurface $C^\omega$-diffeomorphic to a real $3$-dimensional torus $\bS^1\times \bS^1\times \bS^1$. 
The maps $F_b$ in Theorem \ref{thm:main} can be constructed in this manner.
\end{remark}

\subsection{Holomorphic line bundles on toroidal groups}

The neighborhood $W$ given in \eqref{eqn:nbhdW} is closely related to the {\it toroidal group}. 
For $\tau \in \mathbb{H}$ and a non-torsion pair $(p,q) \in \mathbb{R}^2$, we consider 
\[
U=U_{\tau,(p,q)}:=\mathbb{C}^2_{(z, \eta)}/\Lambda \quad \text{ with } \quad 
\Lambda=\Lambda_{\tau,(p,q)}:=\left\langle 
\left(
    \begin{array}{c}
       0 \\
       1
    \end{array}
  \right), 
\left(
    \begin{array}{c}
       1 \\
       p
    \end{array}
  \right), 
\left(
    \begin{array}{c}
       \tau \\
       q
    \end{array}
  \right)
\right\rangle. 
\]
It is seen that $U$ becomes a toroidal group (see \emph{e.g.} \cite{AK}). 
On the toroidal group $U$, an important class of line bundles is the {\it theta line bundles}, given as follows. 
Let 
\[
H=\left(
    \begin{array}{cc}
       a & b \\
       \overline{b} & c
    \end{array}
  \right) \in \rM_2(\mathbb{C})
\]
be a Hermitian matrix satisfying the condition
\begin{equation} \label{eqn:condi}
\mathrm{Im}\, H(\lambda,\mu) \in \mathbb{Z} \quad (\lambda , \mu \in \Lambda), 
\end{equation}
where $H(x,y)={}^t x H \overline{y}$ for $x,y \in \mathbb{C}^2$, and let $\rho : \Lambda \to U(1)$ be a {\it semi-character} of $\mathrm{Im}\, H$, that is, it satisfies 
\[
\rho(\lambda+\mu)=\rho(\lambda)\rho(\mu)\exp\left(\pi \sqrt{-1} \mathrm{Im}\, H(\lambda,\mu)\right) \quad (\lambda , \mu \in \Lambda). 
\]
Then we define the holomorphic function $\alpha_\lambda=\alpha^{(H,\rho)}_\lambda : \mathbb{C}^2_{(z, \eta)} \to \mathbb{C}$ by
\[
\alpha_\lambda(x) := \rho(\lambda)\exp\left(\pi H(x, \lambda)+(\pi/2) H(\lambda, \lambda)\right), \quad \lambda \in \Lambda,\ x={}^t(z, \eta) \in \mathbb{C}^2. 
\]
From \eqref{eqn:condi}, the function $\alpha_\lambda(x)$ satisfies the cocycle condition
\[
\alpha_{\lambda+\mu}(x) = \alpha_{\lambda}(x+\mu) \alpha_{\mu}(x), \quad \lambda,\mu \in \Lambda,\ x \in \mathbb{C}^2, 
\]
and hence 
\[
L=L_{H,\rho}:=(\mathbb{C}_{\zeta} \times \mathbb{C}^2)/ \Lambda
\]
with 
\[
\lambda \cdot (\zeta,x):=(\alpha_{\lambda}(x) \cdot \zeta, x+\lambda), \quad \lambda \in \Lambda,\ \zeta \in \mathbb{C}_{\zeta},\  x \in \mathbb{C}^2
\]
defines a line bundle on $U$, which is called a {\it theta line bundle} on $U$. 
In our setting, note that $\lambda_2 \in \mathbb{R}$ for any ${}^t (\lambda_1,\lambda_2) \in \Lambda$. 
Hence a nowhere vanishing holomorphic function $\beta : \mathbb{C}^2 \to \mathbb{C}^*$, given by 
\[
\beta(z, \eta)=\exp (-\pi c \eta^2/2),
\]
satisfies
\[
\alpha^{(H_0,\rho)}_\lambda(x) =\beta(x+\lambda)\alpha^{(H,\rho)}_{\lambda}(x) \beta(x)^{-1} \quad (\lambda \in \Lambda, x \in \mathbb{C}^2) 
\qquad 
\text{with} \quad H_0=\left(
    \begin{array}{cc}
       a & b \\
       \overline{b} & 0
    \end{array}
  \right),
\]
which means that $L_{H,\rho}$ is holomorphically isomorphic to $L_{H_0,\rho}$. Hereafter, we assume $c=0$ and put 
\begin{equation} \label{eqn:hermitian}
H=\left(
    \begin{array}{cc}
       a & b \\
       \overline{b} & 0
    \end{array}
  \right) \in \rM_2(\mathbb{C}). 
\end{equation}
On the line bundle $L_{H,\rho}$, there is a natural metric $h=h_H$, given by 
\[
|\zeta|_{h, x}^2 := \exp(-\pi H(x, x)) |\zeta|^2, 
\]
which is well-defined because
\begin{align*}
|\alpha_\lambda(x)\cdot \zeta|_{h, x+\lambda}^2
&= |\alpha_\lambda(x)|^2\cdot \exp(-\pi H(x+\lambda, x+\lambda)) |\zeta|^2 \\
&=\exp({\rm Re}(2\pi H(x, \lambda)+\pi H(\lambda, \lambda))) \cdot \exp(-\pi H(x+\lambda, x+\lambda)) |\zeta|^2 \\
&=\exp(\pi H(x, \lambda) + \pi H(\lambda, x) +\pi H(\lambda, \lambda))\\
&\quad \cdot \exp(-\pi H(x, x)-\pi H(x, \lambda)-\pi H(\lambda, x)-\pi H(\lambda, \lambda))|\zeta|^2\\
&=\exp(-\pi H(x, x))|\zeta|^2=|\zeta|_{h, x}^2. 
\end{align*}
In particular, the curvature form of $h_H$ is given by 
\[
\Theta_{h_H} := - \partial \overline{\partial} \log h_H=
\pi \cdot (a dz\wedge d\overline{z} + bdz\wedge d\overline{\eta} + \overline{b}d\eta\wedge d\overline{z} )
\]
with $x={}^t(z,\eta) \in \mathbb{C}^2$, and $c_1(L_{H,\rho})=[\sqrt{-1} \Theta_{h_H}/2 \pi]$. 
Moreover the following result holds (see \cite{AK}). 

\begin{proposition}\label{prop:thetabdle}
Assume that $(p,q)$ satisfies the Diophantine condition. Then any line bundle $L$ on $U_{\tau,(p,q)}$ is holomorphically isomorphic to $L_{H,\rho}$ for some $(H,\rho)$. 
\end{proposition}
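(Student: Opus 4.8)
The plan is to establish the statement as an Appell--Humbert type theorem for the toroidal group $U=U_{\tau,(p,q)}$: I would describe $\mathrm{Pic}(U)$ by factors of automorphy, split the classification into a ``N\'eron--Severi'' part governed by the first Chern class and a ``$\mathrm{Pic}^0$'' part governed by flat bundles, and use the Diophantine condition precisely to control the latter. First I would describe every line bundle by a factor of automorphy. Since the universal cover $\mathbb{C}^2_{(z,\eta)}$ of $U$ is Stein and contractible, every holomorphic line bundle on $\mathbb{C}^2$ is trivial, so pulling back $L$ and trivializing exhibits $L$ as $(\mathbb{C}_\zeta\times\mathbb{C}^2)/\Lambda$ for a $1$-cocycle $\set{f_\lambda}_{\lambda\in\Lambda}$ of nowhere-vanishing functions $f_\lambda\in\mathcal{O}^*(\mathbb{C}^2)$ satisfying $f_{\lambda+\mu}(x)=f_\lambda(x+\mu)f_\mu(x)$, two cocycles giving isomorphic bundles exactly when they differ by a coboundary $\lambda\mapsto g(x+\lambda)/g(x)$ with $g\in\mathcal{O}^*(\mathbb{C}^2)$. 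Thus $\mathrm{Pic}(U)\cong H^1(\Lambda,\mathcal{O}^*(\mathbb{C}^2))$, the theta bundles $L_{H,\rho}$ are the distinguished cocycles $\alpha^{(H,\rho)}_\lambda$, and the goal is to show that any $\set{f_\lambda}$ is cohomologous to some $\alpha^{(H,\rho)}_\lambda$.

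For the N\'eron--Severi part, $c_1(L)\in H^2(U,\mathbb{Z})$ lies in the kernel of $H^2(U,\mathbb{Z})\to H^2(U,\mathcal{O}_U)$. As in the compact (abelian variety) case, I would identify this kernel with the group of Hermitian forms $H$ on $\mathbb{C}^2$ whose imaginary part satisfies the integrality condition $\mathrm{Im}\,H(\lambda,\mu)\in\mathbb{Z}$ for $\lambda,\mu\in\Lambda$ (the N\'eron--Severi group of $U$). The alternating form $E=\mathrm{Im}\,H$ is read off from $c_1(L)$ via the cup product on $H^1(U,\mathbb{Z})\cong\mathrm{Hom}(\Lambda,\mathbb{Z})$, and the $(1,1)$-type condition expressing $c_1(L)\in\ker(\to H^2(\mathcal{O}_U))$ guarantees $E$ is the imaginary part of a (unique) Hermitian $H$. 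After the normalization in \eqref{eqn:hermitian} (taking $c=0$), this yields a Hermitian matrix $H$ of the required shape together with a semi-character $\rho_0$ of $E$, so $L_{H,\rho_0}$ has the same Chern class as $L$. Replacing $L$ by $L\otimes L_{H,\rho_0}^{-1}$, it remains to treat $c_1(L)=0$, that is, $L\in\mathrm{Pic}^0(U)$.

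For the $\mathrm{Pic}^0$ part, the exponential sequence gives $\mathrm{Pic}^0(U)\cong H^1(U,\mathcal{O}_U)/\mathrm{im}\,H^1(U,\mathbb{Z})$, and I would compute $H^1(U,\mathcal{O}_U)$ by Fourier expansion along the $\mathbb{C}^*$-fibration $U\to C=\mathbb{C}_z/\langle1,\tau\rangle$, $(z,\eta)\mapsto z$, whose fibre coordinate is $w=\exp(2\pi\sqrt{-1}\,\eta)$. Writing a $\delbar$-representative as $\sum_{n\in\mathbb{Z}}c_n(z)\,w^n$ reduces the equation to $\delbar$-problems on the elliptic curve $C$, the $n$-th twisted by the flat line bundle on $C$ with monodromies $\exp(2\pi\sqrt{-1}\,np)$ and $\exp(2\pi\sqrt{-1}\,nq)$. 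Each such twist is nontrivial for $n\neq0$, so the equation is solvable, but the norm of the solution operator blows up as the twist approaches the trivial bundle, i.e. as $\min_{\mu,\nu}\abs{n(p+q\sqrt{-1})-(\mu+\nu\sqrt{-1})}$ becomes small. This small-divisor estimate is the main obstacle, and it is exactly where the Diophantine condition (Definition~\ref{def:diophpair}) enters: its polynomial lower bound $A\,n^{-\alpha}$ dominates the exponential decay of the Fourier coefficients of the holomorphic data, so the summed solution converges and $H^1(U,\mathcal{O}_U)$ is finite-dimensional and Hausdorff. Hausdorffness then forces every class of $\mathrm{Pic}^0(U)$ to be represented by a flat, in fact unitary, line bundle, i.e. by a theta bundle $L_{0,\rho'}$ with $H=0$ and a unitary semi-character $\rho'$. (Without the Diophantine condition $H^1(U,\mathcal{O}_U)$ can fail to be Hausdorff, producing non-flat topologically trivial bundles outside the family $L_{H,\rho}$, so the hypothesis is essential.)

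Finally, combining the two parts gives $L\cong L_{H,\rho_0}\otimes L_{0,\rho'}\cong L_{H,\rho_0\rho'}=L_{H,\rho}$ with $\rho=\rho_0\rho'$, which is again a semi-character of $\mathrm{Im}\,H$ since $L_{0,\rho'}$ contributes a genuine character; this proves the proposition. I expect essentially all of the real work, and the sole use of the Diophantine hypothesis, to lie in the small-divisor estimate in the Fourier computation of $H^1(U,\mathcal{O}_U)$ sketched above.
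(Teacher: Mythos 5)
The paper gives no proof of Proposition~\ref{prop:thetabdle}: it is quoted from Abe--Kopfermann \cite{AK}, and your sketch is essentially the standard Appell--Humbert-type argument by which that reference establishes it, so you are on the same route as the source the paper relies on. The genuinely hard step is exactly the one you isolate: the small-divisor estimate showing that under the Diophantine condition only the $n=0$ Fourier--Laurent mode contributes, so that $\mathrm{Pic}^0(U)$ consists of pullbacks of unitary flat bundles from $C$, i.e.\ of theta bundles with $H=0$; this is the same mechanism (a polynomial lower bound on the distance to the trivial bundle beating the geometric decay of the coefficients, via Ueda's lemma) that the paper itself uses in Lemma~\ref{top_triv_lb} and the remark following it. Two places in your write-up deserve tightening. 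First, ``Hausdorffness then forces flatness'' is a shorthand for what one actually proves, namely the termwise solvability and convergence that kill every $n\neq 0$ mode and exhibit the class as $\pi^*$ of a class on $C$. Second, in the N\'eron--Severi step the $(1,1)$-type condition carries no content here: $U$ is homotopy equivalent to a real $3$-torus, so $H^2(U,\mathbb{Z})\cong\mathrm{Alt}^2(\Lambda,\mathbb{Z})\cong\mathbb{Z}^3$, and what must be checked is that every integral alternating form on $\Lambda$ is $\mathrm{Im}\,H$ for some Hermitian $H$ on $\mathbb{C}^2$ of the normalized shape \eqref{eqn:hermitian}. This holds because $\lambda_2\in\mathbb{R}$ for all $\lambda\in\Lambda$ (so the $(2,2)$-entry of $H$ is irrelevant, as the paper notes) and because the real-linear map $(a,b)\mapsto\bigl(\mathrm{Im}\,H(x_\beta,x_\alpha),\,\mathrm{Im}\,H(x_\gamma,x_\beta),\,\mathrm{Im}\,H(x_\gamma,x_\alpha)\bigr)$ computed in Lemma~\ref{lem:intersection} is a bijection $\mathbb{R}^3\to\mathbb{R}^3$ since $\mathrm{Im}\,\tau\neq 0$. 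With those two points made precise, your argument is complete and consistent with \cite{AK}.
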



\subsection{Deformations of K3 surfaces and Picard numbers}

The following results are taught by Dr. Takeru Fukuoka. 

\begin{proposition}\label{prop:K3_deform_picard}
Let $P\colon \mathcal{X}\to T$ be a deformation family of K3 surfaces. 
Assume that the Kodaira--Spencer map $\rho_{\rm KS, P} \colon T_T\to R^1P_*T_{\mathcal{X}/T}$ is injective. 
Then, for almost every $t\in T$, it holds that $\rho(X_t)\leq20-\dim(T)$, where $X_t:=P^{-1}(t)$ and $\rho(X_t)$ is the Picard number of $X_t$. 
\end{proposition}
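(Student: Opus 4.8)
The plan is to control the Picard number of the general fiber by relating it to the dimension of the space of $(1,1)$-classes that survive as algebraic classes, and to use the injectivity of the Kodaira--Spencer map to bound the locus where extra algebraic classes can appear. For a K3 surface $X_t$, the Picard number $\rho(X_t)$ equals the rank of the lattice $H^{1,1}(X_t)\cap H^2(X_t,\mathbb{Z})$ by the Lefschetz $(1,1)$-theorem, since $H^1(X_t,\mathcal{O}_{X_t})=H^2(X_t,\mathcal{O}_{X_t})^{\vee\vee}$-type obstructions vanish (indeed $h^{1,0}=0$, so $\mathrm{Pic}(X_t)\cong\mathrm{NS}(X_t)$ injects into $H^2(X_t,\mathbb{Z})$). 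Thus the quantity we must bound is the number of integral cohomology classes that are of Hodge type $(1,1)$ in the fiber over $t$.

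First I would fix a marking, at least locally over a contractible $T'\subset T$, identifying $H^2(X_t,\mathbb{Z})$ with a fixed lattice $L$ (the K3 lattice of signature $(3,19)$), so that the period map $t\mapsto [\sigma_t]\in \mathbb{P}(L\otimes\mathbb{C})$ is defined, where $\sigma_t$ is the holomorphic $2$-form. The key translation is that an integral class $\gamma\in L$ is algebraic on $X_t$ precisely when $\gamma$ is orthogonal to the period point, i.e.\ $\langle \gamma,[\sigma_t]\rangle=0$ (equivalently $\gamma\in H^{1,1}$, using that $H^{2,0}\oplus H^{0,2}$ is spanned by $\sigma_t,\overline{\sigma_t}$ and $\gamma$ is real). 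For a \emph{fixed} nonzero $\gamma$, the condition $\langle\gamma,[\sigma_t]\rangle=0$ cuts out a complex-analytic hypersurface $T_\gamma\subset T'$, which is either all of $T'$ or has measure zero. Since $L$ is countable, $\bigcup_{\gamma}T_\gamma$ restricted to those $\gamma$ for which $T_\gamma\neq T'$ is a countable union of measure-zero sets, hence measure zero; off this set, the algebraic classes are exactly those $\gamma$ with $T_\gamma=T'$, i.e.\ classes orthogonal to $[\sigma_t]$ for \emph{all} $t$.

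The heart of the argument is therefore to show that the space $N:=\{\gamma\in L\otimes\mathbb{C} : \langle\gamma,[\sigma_t]\rangle=0 \text{ for all } t\in T'\}$ has dimension at most $20-\dim T$, so that its integral part has rank at most $20-\dim T$. Here is where injectivity of the Kodaira--Spencer map enters: by the Local Torelli theorem for K3 surfaces, the differential of the period map is identified, up to the contraction $\sigma_t\otimes\!-\,\colon H^1(T_{X_t})\to \mathrm{Hom}(H^{2,0},H^{1,1})$, with the Kodaira--Spencer map composed with this isomorphism; injectivity of $\rho_{\mathrm{KS},P}$ therefore forces the period map (after the marking) to be an immersion, so its image has dimension exactly $\dim T$ at the generic point. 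Consequently the linear span of $\{[\sigma_t]\}_{t\in T'}$, together with its first-order neighborhood encoded by the derivatives $\partial_i[\sigma_t]$, has dimension at least $\dim T+1$ inside $L\otimes\mathbb{C}$; any $\gamma\in N$ must annihilate this entire span under $\langle\cdot,\cdot\rangle$. Since the pairing is nondegenerate on the $22$-dimensional $L\otimes\mathbb{C}$, the orthogonal complement of a subspace of dimension $\geq \dim T+1$ has dimension $\leq 21-\dim T$; refining the count by observing that $[\sigma_t]$ is isotropic and that $\sigma_t,\overline{\sigma_t}$ both lie outside $H^{1,1}$ yields the sharper bound $\rho(X_t)\le 20-\dim T$ on the rank of $N\cap L$.

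The main obstacle I anticipate is making the dimension count in the last step fully rigorous, namely passing cleanly from ``the period map is an immersion'' to ``the span of the periods and their derivatives has dimension at least $\dim T+1$,'' and then correctly accounting for the two transcendental directions $\sigma_t$ and $\overline{\sigma_t}$ so as to land on $20-\dim T$ rather than a weaker bound. One must be careful that the derivatives $\partial_i[\sigma_t]$ live in $H^{1,1}$ by Griffiths transversality, so they do \emph{not} reduce the transcendental count, and that the relevant orthogonality is with respect to the real class being of type $(1,1)$; assembling these Hodge-theoretic facts with the measure-zero genericity argument is the delicate part, whereas the Lefschetz $(1,1)$ identification and the countable-union argument are routine.
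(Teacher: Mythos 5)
Your proposal is correct and reaches the stated bound, but it runs the argument ``upstairs'' on $T$ rather than ``downstairs'' in the period domain as the paper does. The paper fixes a marking, sends $t$ to the hyperplane $V_t=H^0(X_t,K_{X_t})^{\perp}\in\mathbb{P}(L_\mathbb{C})$, uses Torelli to see that $V_\bullet$ is a locally closed embedding with $\dim(T)$-dimensional image, and then invokes a purely lattice-theoretic lemma: the locus $\set{V\mid \mathrm{rank}(L\cap V)\geq n}$ is a countable union of $(21-n)$-dimensional linear subspaces of $\mathbb{P}(L_\mathbb{C})$, so a dimension comparison shows the preimage of the bad locus has measure zero. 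You instead take the countable union of Noether--Lefschetz loci $T_\gamma\subset T$ and then bound the rank of the ``constant'' part $N\cap L$ by an infinitesimal computation: a class in $N\cap L$ is orthogonal to $\sigma_t$, to $\overline{\sigma_t}$ (by reality of integral classes), and to the $\dim(T)$ derivatives $\partial_i\sigma_t$, which by Griffiths transversality lie in $H^{2,0}\oplus H^{1,1}$ and whose images in $H^{1,1}$ are independent by local Torelli (this is exactly where injectivity of $\rho_{\mathrm{KS},P}$ enters in both proofs); nondegeneracy of the cup product then gives $\mathrm{rank}(N\cap L)\leq 22-(\dim(T)+2)=20-\dim(T)$. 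The step you flag as delicate does close: $\mathrm{span}\set{\sigma_t,\partial_1\sigma_t,\ldots,\partial_m\sigma_t}$ is an $(m+1)$-dimensional subspace of $H^{2,0}\oplus H^{1,1}$ while $\overline{\sigma_t}$ spans $H^{0,2}$, so the total span has dimension $m+2$. What your route buys is transparency about where the two extra units come from --- one from $\sigma_t$ and one from $\overline{\sigma_t}$ --- whereas the paper extracts the second unit from the identification $r(V_t)=\rho(X_t)+1$; since, as your reality argument shows, an integral class orthogonal to $\sigma_t$ is automatically of type $(1,1)$ (so that $L\cap V_t$ is just $\mathrm{NS}(X_t)$), your accounting via $\overline{\sigma_t}$, or equivalently via the isotropy of the period point inside each linear stratum, is the cleaner way to secure the sharp bound $20-\dim(T)$ rather than $21-\dim(T)$.
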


\begin{proof}
Take a base point $0\in T$ and denote by $L:=\Pi_{3, 19}$ the K3 lattice $H^2(X_0, \mathbb{Z})$. 
Fix a marking $R^2P_*\mathbb{C}_{\mathcal{X}}\cong(L_\mathbb{C})_T$, where $L_\mathbb{C}:=L\otimes\mathbb{C}$. 
Consider the map $V_\bullet\colon T\to \mathbb{P}(L_\mathbb{C})$ defined by 
$t\mapsto V_t:=H^0(X_t, K_{X_t})^{\bot}$ for each $t\in T$, where we are regarding $\mathbb{P}(L_\mathbb{C})$ as the set of hyperplanes of $L_\mathbb{C}$. 
It follows from Torelli's theorem that the map $V_\bullet$ is a locally closed embedding of $T$ into $\mathbb{P}(L_\mathbb{C})$. 
Therefore ${\rm Image}\,V_\bullet$ is a locally closed subvariety of $\mathbb{P}(L_\mathbb{C})$ of dimension $\dim(T)$. 
Define $r\colon \mathbb{P}(L_\mathbb{C})\to \mathbb{Z}$ by $r(V):={\rm rank}(L\cap V)$. 
Note that $r(V_t)={\rm rank}(H^2(X_t, \mathbb{Z})\cap (H^{1, 1}(X_t, \mathbb{C})\oplus H^{0, 2}(X_t, \mathbb{C})))=\rho(X_t)+1$ holds for each $t\in T$. 
Therefore the set $\set{t\in T\mid \rho(X_t)< 21-\dim(T)}$ can be rewritten as 
$V_\bullet^{-1}\left(({\rm Image}\,V_\bullet)\setminus \set{V\in \mathbb{P}(L_\mathbb{C})\mid r(V)\geq 22-\dim(T)}\right)$. 
By Lemma \ref{lem_countable_union_F_r} below, $\set{V\in \mathbb{P}(L_\mathbb{C})\mid r(V)\geq 22-\dim(T)}$ is a countable union of ($\dim(T)-1$)-dimensional linear subspaces of $\mathbb{P}(L_\mathbb{C})$. 
\end{proof}

\begin{lemma}\label{lem_countable_union_F_r}
Let $r\colon \mathbb{P}(L_\mathbb{C})\to \mathbb{Z}$ be as in the proof of Proposition \ref{prop:K3_deform_picard}. 
Then $F_n:=\set{V\in \mathbb{P}(L_\mathbb{C})\mid r(V)\geq n}$ is a countable union of $(21-n)$-dimensional linear subspaces of $\mathbb{P}(L_\mathbb{C})$ for each $n=0, 1, 2, \ldots, 21$. 
\end{lemma}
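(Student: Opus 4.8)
The plan is to prove that $F_n = \set{V \in \mathbb{P}(L_\mathbb{C}) \mid r(V) \geq n}$ is a countable union of $(21-n)$-dimensional linear subspaces, where $r(V) = \operatorname{rank}(L \cap V)$ and $L = \Pi_{3,19}$ is the rank $22$ K3 lattice. The key observation is that the condition $r(V) \geq n$ means the hyperplane $V$ (a codimension one subspace of $L_\mathbb{C}$) contains at least $n$ linearly independent integral vectors from $L$. So I would parametrize $F_n$ by the sublattices of $L$ that can be contained in such a $V$.

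First I would enumerate the relevant combinatorial data. Since $L$ is a free $\mathbb{Z}$-module of rank $22$, it has only countably many primitive sublattices $M \subseteq L$ of any fixed rank $n$ (a sublattice is determined by a finite integral basis, and the set of such bases is countable). For each rank-$n$ sublattice $M$, I would form the complex subspace $M_\mathbb{C} = M \otimes \mathbb{C} \subseteq L_\mathbb{C}$, which is an $n$-dimensional subspace, and consider the set
\[
G_M := \set{V \in \mathbb{P}(L_\mathbb{C}) \mid M_\mathbb{C} \subseteq V}.
\]
Since $V$ ranges over hyperplanes (dimension $21$ subspaces of the $22$-dimensional $L_\mathbb{C}$), requiring $M_\mathbb{C} \subseteq V$ imposes that $V$ corresponds to a point in $\mathbb{P}((L_\mathbb{C}/M_\mathbb{C})^*)$ after dualizing — concretely, the hyperplanes containing a fixed $n$-dimensional subspace form a linear subspace of $\mathbb{P}(L_\mathbb{C})$ (viewed as the space of hyperplanes) of dimension $(22 - n) - 1 = 21 - n$. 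Thus each $G_M$ is a $(21-n)$-dimensional linear subspace.

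The heart of the argument is then the set-theoretic identity $F_n = \bigcup_M G_M$, the union taken over all rank-$n$ sublattices $M \subseteq L$. For the inclusion $\bigcup_M G_M \subseteq F_n$: if $M_\mathbb{C} \subseteq V$ for some rank-$n$ lattice $M$, then $V$ contains $n$ independent integral vectors, so $\operatorname{rank}(L \cap V) \geq n$. For the reverse inclusion: if $r(V) \geq n$, choose $n$ linearly independent elements of $L \cap V$ and let $M$ be the sublattice they generate; then $M$ has rank $n$, $M \subseteq V$, hence $M_\mathbb{C} \subseteq V$ and $V \in G_M$. Since the sublattices $M$ range over a countable set, this exhibits $F_n$ as a countable union of $(21-n)$-dimensional linear subspaces, as desired.

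The step I expect to require the most care is the bookkeeping around the convention that $\mathbb{P}(L_\mathbb{C})$ parametrizes \emph{hyperplanes} rather than lines, and in particular verifying that the locus of hyperplanes containing a fixed $n$-dimensional subspace is genuinely a linear subspace of the projective space of the expected dimension $21 - n$; this is a routine linear-duality computation but one must match it against the indexing used in Proposition~\ref{prop:K3_deform_picard}, where $r(V_t) = \rho(X_t) + 1$ and the relevant threshold is $n = 22 - \dim(T)$, giving subspaces of dimension $\dim(T) - 1$. The only genuinely nontrivial input is the countability of the set of finite-rank sublattices of a fixed lattice, which follows immediately from the countability of $L$ itself.
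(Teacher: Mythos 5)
Your proof is correct and follows essentially the same route as the paper: both enumerate the (countably many) rank-$n$ sublattices $M\subset L$, identify the hyperplanes containing $M_\mathbb{C}$ with $\mathbb{P}(L_\mathbb{C}/M_\mathbb{C})$ (your $G_M$ is exactly the paper's $\set{p_M^{-1}(W)\mid W\in\mathbb{P}(L_\mathbb{C}/M_\mathbb{C})}$), and verify the two inclusions of $F_n=\bigcup_M G_M$ in the same way. The dimension count $21-n$ and the countability argument also match the paper's.
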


\begin{proof}
Set $\Lambda:=\set{M\subset L\mid M: \text{sub module},\ {\rm rank}\,M=n}$. 
For $M\in\Lambda$ and $W\in \mathbb{P}(L_\mathbb{C}/M_\mathbb{C})$, it clearly holds that $p_M^{-1}(W)\in F_n$, 
where $M_\mathbb{C}:=M\otimes \mathbb{C}$ and $p_M\colon L_\mathbb{C}\to L_\mathbb{C}/M_\mathbb{C}$ is the natural projection. 
Conversely, for each $V\in F_n$ and a sublattice $M\subset V$ of rank $n$, we have $V=p_M^{-1}(W)$ by defining $W:=V/M_\mathbb{C}\in \mathbb{P}(L_\mathbb{C}/M_\mathbb{C})$. 
Therefore we obtain the description 
\[
F_n=\bigcup_{M\in\Lambda}\set{p_M^{-1}(W)\mid W\in \mathbb{P}(L_\mathbb{C}/M_\mathbb{C})}. 
\]
As $\Lambda$ is countable and the map $p_M^{-1}(-)\colon \mathbb{P}(L_\mathbb{C}/M_\mathbb{C})\ni W\mapsto p_M^{-1}(W)\in F_n\subset \mathbb{P}(L_\mathbb{C})$ is a linear embedding for each $M$, the lemma follows. 
\end{proof}


\section{Line bundles on $W$ and $V$} \label{sec:linebundles}

For $\tau \in \mathbb{H}$, let $C=\mathbb{C}_{z} /\langle 1,\tau \rangle$ be a complex torus, 
and for a non-torsion pair $(p, q) \in\mathbb{R}^2$ and $0 \le r<R \le \infty$, let $W =W_{\tau,(p,q)}^R$ be defined in \eqref{eqn:nbhdW} and $V =V_{\tau,(p,q)}^{r,R}$ 
be defined by 
\[
V =V_{\tau,(p,q)}^{r,R}:= \set{(z, w)\in \mathbb{C}^2\mid r<\abs{w}<R }/\sim, 
\]
where $\sim$ is given by \eqref{eqn:relationVW}. 
We notice that $V$ is isomorphic to an open submanifold of the toroidal group $U=U_{\tau,(p,q)}=(\mathbb{C}_z \times \mathbb{C}_{\eta})/\Lambda$,
namely,
\[
U \supset \left(\mathbb{C}_z \times \set{ -\log R < 2 \pi \mathrm{Im}\,\eta<-\log r }\right)/\Lambda \ni [(z,\eta)] \overset{\cong}{\longmapsto} \left[\left(z,\exp(2 \pi \sqrt{-1} \eta)\right)\right] \in V
\]
with $U_{\tau,(p,q)} \cong V_{\tau,(p,q)}^{0,\infty}$, 
and $W$ is obtained from $V_{\tau,(p,q)}^{0,R}$ by adding the complex torus $C$.
Let $\pi : W \to C$ be the natural projection, given by $\pi([(z,w)])=[z]$, and denote $\pi|_{V} : V \to C$ by $\pi : V \to C$ for simplicity. 

\begin{lemma}\label{top_triv_lb}
Assume that $(p,q)$ satisfies the Diophantine condition. Then for any $L\in {\rm Pic}^0(W)$, the equality $L=\pi^*(L|_C)$ holds.
\end{lemma}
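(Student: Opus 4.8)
The plan is to prove $L = \pi^*(L|_C)$ for any topologically trivial line bundle $L$ on $W$, using the fact that $W$ is obtained from $V_{\tau,(p,q)}^{0,R}$ by adding the central complex torus $C$, and exploiting the toroidal-group picture together with Proposition~\ref{prop:thetabdle}. Since $C \subset W$ is the zero section and $\pi\colon W \to C$ is a holomorphic retraction onto it, the statement is really the claim that every topologically trivial bundle on $W$ is the pullback of its restriction to the central curve; equivalently, that the restriction map $\mathrm{Pic}^0(W) \to \mathrm{Pic}^0(C)$ is injective, with $\pi^*$ as a one-sided inverse. Because $\pi \circ \iota = \mathrm{id}_C$ for the inclusion $\iota\colon C \hookrightarrow W$, the composite $L \mapsto L|_C \mapsto \pi^*(L|_C)$ is a projector, so it suffices to show that a bundle $N := L \otimes \pi^*(L|_C)^{-1}$, which is trivial along $C$, must itself be trivial on $W$.

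First I would reduce to the toroidal setting. The open part $V_{\tau,(p,q)}^{0,R} = W \setminus C$ embeds as the open subset of the toroidal group $U = U_{\tau,(p,q)}$ cut out by $2\pi\,\mathrm{Im}\,\eta > -\log R$ (i.e. $|w| < R$ with $w = \exp(2\pi\sqrt{-1}\,\eta)$), via $[(z,\eta)] \mapsto [(z, \exp(2\pi\sqrt{-1}\,\eta))]$. I expect that restriction $\mathrm{Pic}(U) \to \mathrm{Pic}(W\setminus C)$ is an isomorphism (or at least surjective), so that any line bundle on $W\setminus C$ comes from a theta line bundle $L_{H,\rho}$ on $U$ by Proposition~\ref{prop:thetabdle}. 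The key structural point is then to read off, from the explicit Hermitian form $H = \left(\begin{smallmatrix} a & b \\ \overline{b} & 0 \end{smallmatrix}\right)$ and the normalization \eqref{eqn:hermitian}, what it means for such a bundle to be \emph{topologically trivial} and to \emph{extend across $C$ with trivial restriction to $C$}. The curvature form $\Theta_{h_H} = \pi(a\,dz\wedge d\overline z + b\,dz\wedge d\overline\eta + \overline b\,d\eta\wedge d\overline z)$ and $c_1 = [\sqrt{-1}\,\Theta_{h_H}/2\pi]$ let me express $c_1(L)$ via $(a,b)$; topological triviality on $W$ (which is homotopy equivalent to $C$) forces vanishing of the relevant Chern-class components, and I anticipate this kills the $b$-entries, leaving a form $H$ whose data depends only on the $z$-variable, i.e. a pullback from $C$.

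The crucial role of the Diophantine condition enters through Proposition~\ref{prop:thetabdle}: without it, $\mathrm{Pic}(U)$ contains \emph{wild} (non-theta) line bundles, and the clean classification fails. With it, every bundle on $U$ is a theta bundle, so the argument becomes a finite computation with $(H,\rho)$. Concretely, I would argue that $N = L \otimes \pi^*(L|_C)^{-1}$, being topologically trivial on $W$ and trivial on $C$, corresponds to theta data $(H,\rho)$ with $H = 0$ (forced by $c_1(N) = 0$ together with the homotopy equivalence $W \simeq C$, which makes $H^{1,1}$-type contributions vanish) and with semi-character $\rho$ trivial on the sublattice generated by $\binom{0}{1}$ (the fibre direction), the triviality along $C$ pinning down $\rho$ completely; hence $N$ is holomorphically trivial.

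The main obstacle I foresee is controlling extension across the central curve $C$ and the identification $\mathrm{Pic}(W) \cong \mathrm{Pic}(U)$: a priori a bundle on the punctured neighborhood $W \setminus C$ need not extend to $W$, and conversely theta data on all of $U$ describe bundles on the full toroidal group, not just on the slab $|w| < R$. I would handle this by using that $C$ is a smooth divisor in the smooth surface $W$, so a line bundle on $W\setminus C$ extends to $W$ precisely when its associated divisor class is a multiple of $[C]$; topological triviality on $W$ should force this multiple to be zero, giving an honest extension and trivial monodromy in the $\eta$-direction. Making this compatibility between the Arnol'd linearization, the toroidal embedding, and the theta-bundle classification fully rigorous — rather than merely formal — is where the real work lies; the rest is the explicit curvature and semi-character bookkeeping sketched above.
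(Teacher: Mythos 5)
There is a genuine gap, and it sits exactly where you locate ``the real work.'' Your plan reduces everything to the claim that restriction $\mathrm{Pic}(U)\to\mathrm{Pic}(W\setminus C)$ is surjective, so that every line bundle on $W\setminus C$ is (the restriction of) a theta bundle $L_{H,\rho}$. But Proposition~\ref{prop:thetabdle} is a statement about the \emph{full} toroidal group $U=U_{\tau,(p,q)}\cong V_{\tau,(p,q)}^{0,\infty}$, not about the bounded slab $W\setminus C\cong V_{\tau,(p,q)}^{0,R}$, and the classification of $\mathrm{Pic}$ on such a slab is not a formal consequence of the classification on $U$. In fact it is essentially equivalent to the lemma you are trying to prove: the Remark following Lemma~\ref{top_triv_lb} points out that the analogous statement for $V$ needs its own Laurent-series argument. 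The analytic content of the lemma is precisely a small-denominators problem --- one expands a Čech cocycle $f_{jk}=\sum_n f_{jk,n}(z)w^n$, solves $-g_{j,n}+t_{jk}^{-n}g_{k,n}=f_{jk,n}$ for each $n$ using non-torsionness of $N_{C/W}$, and then proves convergence of $\sum_n g_{j,n}w^n$ via Ueda's lemma together with the Diophantine bound $d(\mathbb{I}_C,N_{C/W}^n)\geq A n^{-\alpha}$. Your proposal never confronts this convergence issue; it is hidden inside the unproven surjectivity claim, so the Diophantine hypothesis is invoked only through a proposition that does not apply to the space at hand.

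A secondary problem is your treatment of the central curve. The criterion ``a line bundle on $W\setminus C$ extends to $W$ precisely when its divisor class is a multiple of $[C]$'' is not a usable statement here, and more importantly the direction you actually need fails: knowing that $N=L\otimes\pi^*(L|_C)^{-1}$ restricts trivially to $W\setminus C$ does \emph{not} force $N$ to be trivial on $W$, since the restriction map $\mathrm{Pic}(W)\to\mathrm{Pic}(W\setminus C)$ has nontrivial kernel (for instance $\mathcal{O}_W(kC)$ is trivial off $C$ but equals $\pi^*N_{C/W}^{k}$, which is nontrivial for $k\neq 0$ because $N_{C/W}$ is non-torsion). So even granting the theta-bundle classification on the slab, your argument would only identify $L$ with $\pi^*(L|_C)$ up to an undetermined twist supported along $C$. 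The curvature and semi-character bookkeeping you sketch (forcing $a=0$ and $b=0$ from topological triviality, then pinning down $\rho$) is fine as far as it goes and mirrors Lemma~\ref{lem:intersection} and Proposition~\ref{prop:extentability}, but it cannot be started until the two reductions above are justified, and those are where the whole difficulty of the lemma lives.
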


\begin{proof}
As the topologically trivial bundle $L$ satisfies $c_1(L)=0$, $L$ can be represented by some $\alpha\in H^1(W, \mathcal{O}_W)$ 
from the exact sequence $H^1(W, \mathcal{O}_W) \to {\rm Pic}(W) \overset{c_1}{\longrightarrow} H^2(W, \mathbb{Z})$. 
Hence it is enough to show that $\pi^*(\alpha|_C) =\alpha$. 

Put $\alpha=\{(W_{jk}, f_{jk})\}$, where $W_{jk}=W_j \cap W_k$ and $W_j=\pi^{-1}(U_j)\cong U_j\times \Delta$ with a Stein open covering $\{U_j\}$ of $C$. 
Moreover $f_{jk}$ can be expressed on $W_j$ as a convergent power series
\[
f_{jk}(z_j, w_j) = \sum_{n=0}^\infty f_{jk, n}(z_j)\cdot w_j^n, 
\]
where $(z_j, w_j)$ are coordinates on $W_j$ which come from $(z, w)$. 
Then it is enough to show that there are holomorphic functions $g_j\colon W_j\to \mathbb{C}$ such that 
\[
\set{(W_{jk}, \widehat{f}_{jk})} = \delta\set{(W_j, g_j)}:= \left\{ (W_{jk}, -g_j + g_k) \right\} ,
\]
where 
\[
\widehat{f}_{jk}(z_j, w_j) := f(z_j, w_j)-f(z_j, 0) = \sum_{n=1}^\infty f_{jk, n}(z_j)\cdot w_j^n. 
\]
Note that there exists a multiplicative $1$-cocycle $\{t_{jk}\}$ with $t_{jk} \in U(1)$ representing $N_{C/W}$ such that $w_k=t_{kj} \cdot w_j$ for any $j,k$. 
Since $\set{(U_{jk}, f_{jk, n})} \in H^1\left(\{U_j\}, N_{C/W}^{-n}\right)$ and $N_{C/W}$ is non-torsion, 
the $\delta$-equation 
\[
-g_{j, n}+t_{jk}^{-n}\cdot g_{k, n} = f_{jk, n}
\]
has a unique solution $g_{j, n} \colon U_j\to \mathbb{C}$ for each $n>0$. 
Furthermore the power series 
\begin{equation} \label{eqn:powerseries}
g_j(z_j, w_j)=\sum_{n=1}^\infty g_{j, n}(z_j)\cdot w_j^n
\end{equation}
 converges. 
Indeed, Ueda's lemma (see \cite[Lemma~4]{U}) says that there exists a constant $K>0$ depending only on $C$ and $\set{U_j}$ such that  for any flat line bundle $E$ over $C$ 
and for any $0$-cochain $\set{h_j} \in C^0\left(\set{U_j},\mathcal{O}(E)\right)$, the inequality
\[
d(\mathbb{I}_C,E) \cdot \norm{\set{h_j}} \le K \cdot \norm{\delta \set{h_j}}
\]
holds, where $\mathbb{I}_C$ is the holomorphically trivial line bundle on $C$, $d(\mathbb{I}_C,E)$ is the Euclidean distance of ${\rm Pic}^0(C)\cong \mathbb{C}/\langle 1, \tau\rangle$, which clearly is an invariant distance, and 
\[
\norm{\set{h_j}}:=\max_{j} \sup_{z \in U_j}\abs{h_j(z)} \quad\text{and}\quad \norm{\delta\set{h_j}}:=\max_{j,k} \sup_{z \in U_j \cap U_k}\abs{h_{jk}(z)} \quad \text{with}\ \ \set{h_{jk}}:=\delta\set{h_j}. 
\]

In our setting, since $N_{C/W}$ satisfies the Diophantine condition, there exist $A>0$ and $\alpha>0$ such that $d(\mathbb{I}_C, N_{C/W}^n)\geq A\cdot n^{-\alpha}$ holds for any $n\geq 1$. 
Cauchy's inequality shows that for any $\ell \in (0,R)$, there exists $M>0$ such that $\abs{f_{jk,n}(z_j)} \le M/\ell^n$ for any $n \ge 1$ and $z_j \in U_j \cap U_k$. 
Hence we have 
\[
\abs{g_{j,n}(z_j)} \le \frac{K}{d(\mathbb{I}_C,N_{C/W}^n)} \cdot  \max_{j,k} \sup_{z_j \in U_j \cap U_k}\abs{f_{jk}(z_j)} \le \frac{K}{A\cdot n^{-\alpha}} \cdot \frac{M}{\ell^n}
=\frac{KM}{A} \cdot \frac{n^{\alpha}}{\ell^n}, 
\]
which means that the power series \eqref{eqn:powerseries} indeed converges because $\ell \in (0,R)$ is chosen arbitrarily. 
Therefore we have $\pi^*(\alpha|_C) =\alpha$ in $H^1(W, \mathcal{O}_W)$. 
\end{proof}

\begin{remark}
The following can be proved in a similar manner by replacing a Taylor power series with a Laurent power one: for any $L\in {\rm Pic}^0(V)$, there exists an $F \in {\rm Pic}^0(C)$ such that $L= \pi^*F$, which is proved in \cite{AK} for the case where $V=U$ is a toroidal group. 
Conversely, \cite{AK} also proves the statement that if a pair $(p,q)$ {\it does not} satisfy the Diophantine condition, 
then there exists an $L\in {\rm Pic}^0(U)$ such that  $L \neq \pi^*F$ for any $F \in {\rm Pic}^0(C)$. 
\end{remark}

\begin{proposition} \label{prop:linebundleonW}
Assume that $(p,q)$ satisfies the Diophantine condition. 
Then $L=\pi^*(L|_C)$ holds for any $L\in {\rm Pic}(W)$. In particular, the restriction map ${\rm Pic}(W)\to {\rm Pic}(C)$ is an isomorphism.
\end{proposition}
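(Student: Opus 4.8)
The plan is to reduce the general statement about arbitrary line bundles on $W$ to the already-established Lemma~\ref{top_triv_lb} about topologically trivial bundles, by first handling the first Chern class and then applying the topologically trivial case. The key structural fact I would exploit is that $W$ is homotopy equivalent to its zero section $C$ via the projection $\pi\colon W\to C$, so that $\pi^*\colon H^2(C,\mathbb{Z})\to H^2(W,\mathbb{Z})$ is an isomorphism and likewise at the level of $c_1$. Given $L\in\mathrm{Pic}(W)$, the restriction $L|_C\in\mathrm{Pic}(C)$ has some first Chern class $c_1(L|_C)\in H^2(C,\mathbb{Z})$, and I would first check that $c_1(L)=\pi^*c_1(L|_C)$ in $H^2(W,\mathbb{Z})$; this follows because $\pi\circ\iota=\mathrm{id}_C$ for the inclusion $\iota\colon C\hookrightarrow W$, so $\iota^*\pi^*c_1(L|_C)=c_1(L|_C)=\iota^*c_1(L)$, and $\iota^*$ is an isomorphism on $H^2$ by the homotopy equivalence.

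Once the Chern classes agree, I would set $L':=L\otimes\pi^*(L|_C)^{-1}\in\mathrm{Pic}(W)$ and observe that $c_1(L')=c_1(L)-\pi^*c_1(L|_C)=0$, so $L'\in\mathrm{Pic}^0(W)$ is topologically trivial. Now Lemma~\ref{top_triv_lb} applies directly to $L'$, giving $L'=\pi^*(L'|_C)$. But by construction $L'|_C=L|_C\otimes(\pi^*(L|_C)^{-1})|_C=L|_C\otimes(L|_C)^{-1}=\mathbb{I}_C$ is holomorphically trivial on $C$, using again that $\pi\circ\iota=\mathrm{id}_C$. Hence $L'=\pi^*\mathbb{I}_C=\mathbb{I}_W$ is trivial, which unwinds to $L=\pi^*(L|_C)$, exactly the desired equality.

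For the final sentence, that the restriction map $\mathrm{Pic}(W)\to\mathrm{Pic}(C)$, $L\mapsto L|_C$, is an isomorphism, I would argue that $\pi^*\colon\mathrm{Pic}(C)\to\mathrm{Pic}(W)$ is its two-sided inverse. Surjectivity of restriction (equivalently, that $\pi^*$ is a right inverse) is immediate from the identity $L=\pi^*(L|_C)$ just proved, since every $L$ lies in the image of $\pi^*$. Injectivity follows because $(\pi^*F)|_C=F$ for any $F\in\mathrm{Pic}(C)$ (once more from $\pi\circ\iota=\mathrm{id}_C$), so restriction is a left inverse to $\pi^*$; thus the two maps are mutually inverse group homomorphisms and the restriction is an isomorphism.

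The main obstacle is really just the identification of $c_1(L)$ with $\pi^*c_1(L|_C)$, i.e.\ verifying that the topological part splits off cleanly; everything afterward is a formal consequence of Lemma~\ref{top_triv_lb}. The cleanest justification is the homotopy equivalence $\pi\colon W\simeq C$ (with homotopy inverse $\iota$), which makes both $\pi^*$ and $\iota^*$ isomorphisms on cohomology and forces any class on $W$ to be $\pi^*$ of its pullback to $C$; I would state this explicitly rather than leave it implicit, since it is the one genuinely topological input beyond the analytic Lemma~\ref{top_triv_lb}.
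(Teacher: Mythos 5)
Your proposal is correct and follows essentially the same route as the paper: both use that $C$ is a deformation retract of $W$ to see that $L\otimes\pi^*(L|_C)^{-1}$ has vanishing first Chern class, then apply Lemma~\ref{top_triv_lb} to that topologically trivial bundle together with the triviality of its restriction to $C$. Your write-up merely makes explicit a few steps (the $\iota^*$/$\pi^*$ bookkeeping and the verification that restriction and $\pi^*$ are mutually inverse) that the paper leaves implicit.
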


\begin{proof}
As $C$ is a deformation retract of $W$, the restriction map $H^2(W, \mathbb{Z})\to H^2(C, \mathbb{Z})$ is an isomorphism. 
Hence we have $c_1(L \otimes \pi^*(L^{-1}|_C))=0$ and $L \otimes \pi^*(L^{-1}|_C)$ is topologically trivial. 
Since $(L \otimes \pi^*(L^{-1}|_C))|_C$ is a trivial bundle on $C$, one has $L=\pi^*(L|_C)$ by Lemma \ref{top_triv_lb}. 
\end{proof}

Now let us recall the three $2$-cycles 
\[
A_{\alpha\beta}=\bS_{\alpha}^1 \times \bS_{\beta}^1, \quad A_{\beta\gamma}=\bS_{\beta}^1 \times \bS_{\gamma}^1, \quad \text{and}\quad A_{\gamma\alpha}=\bS_{\alpha}^1 \times \bS_{\gamma}^1 
\]
on $V$, where, for a base point $[(0,w_0)] \in V$, $\bS_{\alpha}^1, \bS_{\beta}^1, \bS_{\gamma}^1$ are circles given by the images of 
\begin{itemize}
\item $i_{\alpha}\colon [0, 1] \ni \alpha \mapsto \left[(\alpha, \exp(\alpha p \cdot 2 \pi \sqrt{-1}) w_0)\right] \in V$, \\
\item $i_{\beta}\colon [0, 1] \ni \beta \mapsto \left[(\beta \tau, \exp(\beta q \cdot 2 \pi \sqrt{-1}) w_0)\right] \in V$, \\
\item $i_{\gamma}\colon [0, 1] \ni \gamma \mapsto \left[(0, \exp(\gamma \cdot 2 \pi \sqrt{-1}) w_0)\right] \in V$, 
\end{itemize}
respectively. Here, the orientations of $A_{\alpha\beta}, A_{\beta\gamma}, A_{\gamma\alpha}$ are defined by $d \alpha \wedge d \beta$, $d \beta \wedge d \gamma$, $d \alpha \wedge d \gamma$, respectively. 

\begin{lemma} \label{lem:intersection}
For a Hermitian matrix $H$ given in \eqref{eqn:hermitian} satisfying condition \eqref{eqn:condi} and a semi-character $\rho$ of $\,{\rm Im}\,H$, we have
\begin{enumerate}
\item $(L_{H,\rho}\cdot A_{\alpha\beta})=\mathrm{Im}\, H(x_{\beta},x_{\alpha})=a\cdot {\rm Im}\,\tau +p\cdot {\rm Im}(b \tau) -q \cdot {\rm Im}\,b$, \\
\item $(L_{H,\rho}\cdot A_{\beta\gamma}) = \mathrm{Im}\, H(x_{\gamma},x_{\beta})=-{\rm Im}(b\tau)$, \\
\item $(L_{H,\rho}\cdot A_{\gamma\alpha}) = \mathrm{Im}\, H(x_{\gamma},x_{\alpha})=-{\rm Im}\,b$, 
\end{enumerate}
where $x_{\alpha}:={}^t (1,p)$, $x_{\beta}:={}^t (\tau,q)$, and $x_{\gamma}:={}^t (0,1)$.
\end{lemma}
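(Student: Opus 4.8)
The plan is to realize each intersection number as an integral of the Chern form $c_1(L_{H,\rho}) = [\sqrt{-1}\,\Theta_{h_H}/2\pi]$ over the corresponding $2$-cycle, using the explicit curvature
\[
\Theta_{h_H} = \pi\left(a\,dz\wedge d\overline{z} + b\,dz\wedge d\overline{\eta} + \overline{b}\,d\eta\wedge d\overline{z}\right)
\]
computed above. First I would move the three circles into the toroidal group $U = (\mathbb{C}_z\times\mathbb{C}_\eta)/\Lambda$ via the isomorphism $[(z,\eta)]\mapsto[(z,\exp(2\pi\sqrt{-1}\,\eta))]$, i.e.\ through $w = \exp(2\pi\sqrt{-1}\,\eta)$. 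Since $\log w$ is affine in the loop parameter, the parametrizations $i_\alpha, i_\beta, i_\gamma$ lift in the $(z,\eta)$-plane to straight segments realizing the displacements $x_\alpha = {}^t(1,p)$, $x_\beta = {}^t(\tau,q)$, and $x_\gamma = {}^t(0,1)$ respectively, which are exactly the three generators of $\Lambda$. Thus each cycle $A_{\alpha\beta}, A_{\beta\gamma}, A_{\gamma\alpha}$ is a flat $2$-torus, the image of $[0,1]^2$ under $(s,t)\mapsto s\lambda + t\mu\ (+\,\text{base point})$ for the ordered pair $(\lambda,\mu)$ of generators prescribed by the orientations $d\alpha\wedge d\beta$, $d\beta\wedge d\gamma$, $d\alpha\wedge d\gamma$.

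I would then do the computation once, in general. Writing $\lambda = {}^t(\lambda_1,\lambda_2)$ and $\mu = {}^t(\mu_1,\mu_2)$ and pulling $\Theta_{h_H}$ back along $(s,t)\mapsto s\lambda + t\mu$, each of $dz\wedge d\overline{z}$, $dz\wedge d\overline{\eta}$, $d\eta\wedge d\overline{z}$ becomes a multiple of $ds\wedge dt$, and the total coefficient works out to
\[
a(\lambda_1\overline{\mu_1}-\mu_1\overline{\lambda_1}) + b(\lambda_1\overline{\mu_2}-\mu_1\overline{\lambda_2}) + \overline{b}(\lambda_2\overline{\mu_1}-\mu_2\overline{\lambda_1}) = H(\lambda,\mu) - H(\mu,\lambda).
\]
The key observation is that, $H$ being Hermitian, $H(\mu,\lambda) = \overline{H(\lambda,\mu)}$, so this equals $2\sqrt{-1}\,\mathrm{Im}\,H(\lambda,\mu)$. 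Multiplying by $\sqrt{-1}/2$ and integrating over $[0,1]^2$ gives the clean identity
\[
(L_{H,\rho}\cdot A) = \int_{[0,1]^2}\frac{\sqrt{-1}}{2\pi}\Theta_{h_H} = -\mathrm{Im}\,H(\lambda,\mu) = \mathrm{Im}\,H(\mu,\lambda)
\]
for the oriented flat torus spanned by $(\lambda,\mu)$.

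Finally I would specialize: $(\lambda,\mu) = (x_\alpha,x_\beta)$ for $A_{\alpha\beta}$, $(x_\beta,x_\gamma)$ for $A_{\beta\gamma}$, and $(x_\alpha,x_\gamma)$ for $A_{\gamma\alpha}$, producing $\mathrm{Im}\,H(x_\beta,x_\alpha)$, $\mathrm{Im}\,H(x_\gamma,x_\beta)$, and $\mathrm{Im}\,H(x_\gamma,x_\alpha)$ as claimed. The explicit values then drop out by evaluating $H(x,y) = {}^t x H\overline{y}$ on these vectors and using that $a$, $p$, $q$ are real; for instance $H(x_\beta,x_\alpha) = a\tau + bp\tau + q\overline{b}$, whose imaginary part is $a\,\mathrm{Im}\,\tau + p\,\mathrm{Im}(b\tau) - q\,\mathrm{Im}\,b$. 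I expect the only real care to be needed in the sign bookkeeping --- matching each orientation $d(\cdot)\wedge d(\cdot)$ to the correct ordering of $(\lambda,\mu)$ so that the antisymmetry $\mathrm{Im}\,H(\mu,\lambda) = -\mathrm{Im}\,H(\lambda,\mu)$ is applied with the right sign --- while the remaining substitutions are purely routine.
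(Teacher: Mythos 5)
Your proposal is correct and follows essentially the same route as the paper: both integrate the Chern form $\tfrac{\sqrt{-1}}{2\pi}\Theta_{h_H}$ over the flat $2$-tori obtained by lifting the cycles to the $(z,\eta)$-coordinates, where the paper carries out the pullback explicitly only for $A_{\alpha\beta}$ (substituting $z=\alpha+\tau\beta$, $\eta=p\alpha+q\beta$) and declares the other cases analogous. Your single general computation identifying the integral as $\mathrm{Im}\,H(\mu,\lambda)$ via the Hermitian antisymmetry is a slightly cleaner packaging of the same argument, and your orientation bookkeeping matches the paper's conventions.
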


\begin{proof}
We will only prove the assertion (1) as the other cases can be treated in the same manner. 
Note that the class $c_1(L_{H,\rho})$ can be represented as 
\[
\frac{\sqrt{-1}}{2} \cdot (a dz\wedge d\overline{z} + bdz\wedge d\overline{\eta} + \overline{b}d\eta\wedge d\overline{z}), 
\]
where $w=\exp(\eta \cdot 2 \pi \sqrt{-1})$. By the definition of $A_{\alpha\beta}$, put $z=\alpha+\tau \beta$ and $\eta=p\alpha+q \beta$. 
Since $p, q, \alpha, \beta \in \mathbb{R}$, we have
\[
j_{\alpha\beta}^*dz \wedge d\overline{z} =d(\alpha+\tau \beta) \wedge d(\alpha+\overline{\tau} \beta)=(\overline{\tau}-\tau) d \alpha \wedge d \beta = -2 \sqrt{-1} \mathrm{Im}\,\tau d \alpha \wedge d \beta, 
\]
where $j_{\alpha\beta}\colon A_{\alpha\beta}\to V$ is the embedding induced by $i_\alpha$ and $i_\beta$. 
In a similar manner, one has 
\[
j_{\alpha\beta}^*dz\wedge d\overline{\eta} = - (p \tau-q)d \alpha \wedge d \beta, \quad j_{\alpha\beta}^*d\eta\wedge d\overline{z} = \overline{(p \tau-q)} d \alpha \wedge d \beta, 
\]
and hence
\[
j_{\alpha\beta}^*(bdz\wedge d\overline{\eta} + \overline{b}d\eta\wedge d\overline{z}) = -2 \sqrt{-1} \mathrm{Im} \left(b (p \tau-q)\right) d \alpha \wedge d \beta. 
\]
Therefore we have
\[
(L_{H,\rho}\cdot A_{\alpha\beta})=\int_{[0,1]\times [0,1]} \left(a  \mathrm{Im}\,\tau +  \mathrm{Im} (b (p \tau-q))\right) d \alpha \wedge d \beta=a  \mathrm{Im}\,\tau +  \mathrm{Im} (b (p \tau-q)). 
\]
\end{proof}

\begin{proposition} \label{prop:extentability}
Let $L\in {\rm Pic}(V)$ be a holomorphic line bundle on $V$. 
Assume that $(p,q)$ satisfies the Diophantine condition. 
Then the following are equivalent. 
\begin{enumerate}
\item There exists a holomorphic line bundle $G\in {\rm Pic}(W)$ on $W$ such that $L=G|_V$. 
\item $(L\cdot A_{\beta\gamma}) = (L\cdot A_{\gamma\alpha}) =0$.
\item The equality $b=0$ holds, where $b$ is the $(1, 2)$-element of the Hermitian matrix $H \in \rM_2(\mathbb{C})$ as \eqref{eqn:hermitian} 
satisfying the condition \eqref{eqn:condi} and $L=L_{H, \rho}$ for a semi-character $\rho$ of $\mathrm{Im}\, H$, whose existence is assured by Proposition~\emph{\ref{prop:thetabdle}}. 
\end{enumerate}
\end{proposition}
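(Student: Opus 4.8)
The plan is to establish the cycle of implications $(1)\Rightarrow(2)\Leftrightarrow(3)\Rightarrow(1)$, using Lemma~\ref{lem:intersection} for the middle equivalence and the pullback description of line bundles (Proposition~\ref{prop:linebundleonW} and the Remark following Lemma~\ref{top_triv_lb}) for the two outer implications. I would begin with $(2)\Leftrightarrow(3)$, which is essentially formal once the representation $L=L_{H,\rho}$ is in hand. That representation is what the statement attributes to Proposition~\ref{prop:thetabdle}: since $V$ is an open submanifold of the toroidal group $U=U_{\tau,(p,q)}$, and since the restriction $\mathrm{Pic}(U)\to\mathrm{Pic}(V)$ is surjective (which I would deduce from the exponential sequence together with $H^2(U,\mathbb{Z})\cong H^2(V,\mathbb{Z})$ and the identification $\mathrm{Pic}^0(V)\cong\pi^*\mathrm{Pic}^0(C)$ of the Remark after Lemma~\ref{top_triv_lb}), Proposition~\ref{prop:thetabdle} yields $L=L_{H,\rho}|_V$. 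Granting this, Lemma~\ref{lem:intersection}(2),(3) gives $(L\cdot A_{\beta\gamma})=-\mathrm{Im}(b\tau)$ and $(L\cdot A_{\gamma\alpha})=-\mathrm{Im}\,b$, so (2) holds exactly when $b\in\mathbb{R}$ and $b\cdot\mathrm{Im}\,\tau=0$; as $\mathrm{Im}\,\tau>0$ for $\tau\in\mathbb{H}$, this is equivalent to $b=0$, which is (3).

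For $(1)\Rightarrow(2)$ I would invoke Proposition~\ref{prop:linebundleonW}. If $L=G|_V$ with $G\in\mathrm{Pic}(W)$, then $G=\pi^*(G|_C)$, hence $L=\pi^*(G|_C)$ on $V$. Because the projection $\pi$ collapses the fibre circle $\bS_\gamma^1$ to a point, both $\pi_*A_{\beta\gamma}$ and $\pi_*A_{\gamma\alpha}$ are supported on at most one-dimensional subsets of $C$ and therefore vanish as $2$-cycles. By the projection formula $(L\cdot A_{\beta\gamma})=\langle c_1(G|_C),\pi_*A_{\beta\gamma}\rangle=0$, and likewise $(L\cdot A_{\gamma\alpha})=0$, giving (2).

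The substantive step is $(3)\Rightarrow(1)$. Assuming $b=0$, so that $H=\left(\begin{smallmatrix}a&0\\0&0\end{smallmatrix}\right)$, the first point is that $c_1(L)$ is represented by a multiple of $dz\wedge d\overline{z}$ alone; equivalently, $c_1(L)$ annihilates $A_{\beta\gamma}$ and $A_{\gamma\alpha}$ and hence lies in the rank-one subgroup $\pi^*H^2(C,\mathbb{Z})$. Choosing $F_1\in\mathrm{Pic}(C)$ with $c_1(\pi^*F_1)=c_1(L)$ makes $L\otimes\pi^*F_1^{-1}$ topologically trivial on $V$. At this point the Diophantine hypothesis enters through the Remark after Lemma~\ref{top_triv_lb}: every element of $\mathrm{Pic}^0(V)$ is a pullback from $C$, so $L\otimes\pi^*F_1^{-1}=\pi^*F_2$ for some $F_2\in\mathrm{Pic}^0(C)$, whence $L=\pi^*F$ with $F:=F_1\otimes F_2$. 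Finally, since the projection extends to $\pi\colon W\to C$, the bundle $G:=\pi^*F\in\mathrm{Pic}(W)$ satisfies $G|_V=\pi^*F=L$, which is (1).

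I expect the main obstacle to be exactly this appeal to the Diophantine condition in $(3)\Rightarrow(1)$. A theta line bundle with $b=0$ may still carry nontrivial flat monodromy $\rho(\bS_\gamma^1)$ along the fibre loop, and such monodromy is invisible to the intersection numbers of Lemma~\ref{lem:intersection}; a naive count of $c_1$ cannot see it. For non-Diophantine $(p,q)$ this monodromy could genuinely obstruct writing $L$ as a pullback from $C$ — indeed \cite{AK} exhibits elements of $\mathrm{Pic}^0(U)$ that are not pullbacks — so the equivalence would break down. It is precisely the Diophantine hypothesis, via Lemma~\ref{top_triv_lb} and its Remark, that forces $\mathrm{Pic}^0(V)=\pi^*\mathrm{Pic}^0(C)$ and thereby lets the fibre monodromy be holomorphically absorbed; verifying that this identification does the required work is the crux of the argument, while the remaining implications are bookkeeping.
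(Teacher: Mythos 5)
Your proof is correct, and for the key implication $(3)\Rightarrow(1)$ it takes a genuinely different route from the paper. The paper's argument is a one-liner: once $b=0$ (and $c=0$), the automorphy factor $\alpha^{(H,\rho)}_{\lambda}(z,\eta)$ depends only on $z$, so $L_{H,\rho}$ visibly descends as $\pi^*(L_0)$ for some $L_0\in{\rm Pic}(C)$ and hence extends to $W=\pi^{-1}(C)$. You instead split off the topological part of $L$ by matching $c_1$ with some $\pi^*F_1$ and then absorb the remainder via the identification ${\rm Pic}^0(V)=\pi^*{\rm Pic}^0(C)$ from the Remark after Lemma~\ref{top_triv_lb}. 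Both work; the trade-off is that your route re-invokes the Diophantine condition at this step, whereas the paper's does not need it there --- which is exactly the content of the sentence following the proposition, that the Diophantine hypothesis can be dropped once one assumes $L=L_{H,\rho}$. This also means your closing diagnosis is slightly off target: for a \emph{theta} bundle with $b=0$ the fibre monodromy is the constant $\rho\bigl({}^t(0,1)\bigr)\in\U$, which is always absorbed by the gauge transformation $\exp(-2\pi\sqrt{-1}\theta\eta)$ at the cost of twisting the flat part in the $z$-direction, independently of any Diophantine hypothesis; the condition is really needed only to guarantee (via Proposition~\ref{prop:thetabdle}) that every $L\in{\rm Pic}(V)$ admits a theta representation at all, i.e.\ for item $(3)$ to make sense and for $(2)\Leftrightarrow(3)$. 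Your handling of $(1)\Rightarrow(2)$ via Proposition~\ref{prop:linebundleonW} and the projection formula is also heavier than necessary --- the paper simply notes that $\bS_{\gamma}^1$ is contractible in $W$, so $A_{\beta\gamma}$ and $A_{\gamma\alpha}$ die in $H_2(W,\mathbb{Z})$ --- but it is valid. Finally, your remark on why ${\rm Pic}(U)\to{\rm Pic}(V)$ is surjective fills a gap the paper leaves implicit in the statement of $(3)$, and is a worthwhile addition, though one should check that every class in $H^2(U,\mathbb{Z})$ is realized by a Hermitian $H$ satisfying \eqref{eqn:condi}; this follows by solving for $a$ and $b$ from the three integers in Lemma~\ref{lem:intersection}.
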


Note that the Diophantine assumption on the pair $(p, q)$ in this proposition can be dropped if one assumes that $L=L_{H, \rho}$ for some Hermitian matrix $H \in \rM_2(\mathbb{C})$ satisfying condition \eqref{eqn:condi} and $\rho$ is a semi-character of $\mathrm{Im}\, H$. 

\begin{proof}
The equivalence (2) $\Longleftrightarrow$ (3) follows from Lemma \ref{lem:intersection} and 
(1) $\Longrightarrow$ (2) holds since the circle $\bS_{\gamma}^1$ is contractible in $W$. 
The implication (3) $\Longrightarrow$ (1) follows since the factor $\alpha^{(H,\rho)}_{\lambda}(z, \eta)$ depends only on $z$ and thus $L$ is expressed as $L=\pi^{*}(L_0)$ for some $L_0 \in {\rm Pic}(C)$. 
\end{proof}


\section{Proofs of main theorems} \label{sec:proofs}

\subsection{Proof of Theorem \ref{thm:main1} $(i)$}\label{section:proof_main_i}

It follows from Proposition \ref{prop:linebundleonW} and the assumption $g_{\xi}^*\left(L^-|_{C^-}\right) \cong L^+|_{C^+}$ that 
the restrictions $L^{\pm}|_{V^{\pm}_s}$ of $L^{\pm}|_{W^{\pm}}$ are isomorphic via the biholomorphic map $f_s : V_s^+ \to V_s^-$. 
Thus, $\left(M^+_s, L^+|_{M^+_s}\right)$ and $\left(M^-_s, L^-|_{M^-_s}\right)$ are glued together to yield a holomorphic line bundle $L_s=L^+ \vee L^-$ on $X_s$. 

In order to describe the holomorphic line bundle $\mathcal{L}\to \mathcal{X}$ on $\mathcal{X}$ via the isomorphisms \eqref{eqn:isomorphism}, 
we define manifolds $\mathcal{M}^{\pm}$ and $\mathcal{V}$ by
\[
\mathcal{M}^{\pm}:= (S^{\pm} \times \Delta ) \setminus \set{ (\left[(z^{\pm},w^{\pm})\right],s) \in W^{\pm} \times \Delta \mid \abs{w^{\pm}} \le \sqrt{\abs{s}} R}
\]
and 
\[
\mathcal{V}:=\set{(z^+,w^+,w^-)\in \mathbb{C}^3 \mid \abs{w^{+}}<R,\, \abs{w^{-}}<R,\, \abs{w^+ w^-}<1}/\sim, 
\]
where $\sim$ is the equivalence relation generated by
\[
(z^+, w^+,w^-) \sim
(z^++1,\ e^{p \cdot 2\pi\sqrt{-1}}\cdot w^+,\ e^{-p \cdot 2\pi\sqrt{-1}}\cdot w^-) \sim 
(z^++\tau,\ e^{q \cdot 2\pi\sqrt{-1}}\cdot w^+,\ e^{-q \cdot 2\pi\sqrt{-1}}\cdot w^-). 
\]
Then $\mathcal{M}^{\pm}$ and $\mathcal{V}$ are glued together to yield the deformation family $\mathcal{X}$ via injective holomorphic maps $f_{\pm} : \mathcal{M}^{\pm} \supset \mathcal{V}^{\pm} \to \mathcal{V}$, where 
\[
\mathcal{V}^{\pm}:=  \set{ ([(z^{\pm},w^{\pm})],s) \in W^{\pm} \times\Delta \mid \sqrt{\abs{s}} R<\abs{w^{\pm}} < R} \subset \mathcal{M}^{\pm}
\]
and 
\[
f_{+}\left(([(z^+,w^+)],s)\right)=[(z^+,w^+,s/w^+)], \quad f_{-}\left(([(z^-,w^-)],s)\right)=[(g_{\xi}^{-1}(z^-),s/w^-,w^-)]. 
\]
The restriction of $\mathcal{X} \to \Delta$ on $\mathcal{M}^{\pm}$ is the natural projection $\mathcal{M}^{\pm} \to \Delta$, 
while that on $\mathcal{V}$ is given by $[(z^+,w^+,w^-)] \mapsto w^+ \cdot w^-$. 
Moreover, it should be noted that there are natural projections $\varphi_{\pm} : \mathcal{M}^{\pm} \to S^{\pm}$ and $\varphi : \mathcal{V} \to C^+$ given by $\varphi([(z^+,w^+,w^-)])=[z^+]$. 
Then a holomorphic line bundle $\mathcal{L} \to \mathcal{X}$ is defined by the pullbacks $\varphi_{\pm}^* (L^{\pm})$ on $\mathcal{M}^{\pm}$ and $\varphi^*(L^+|_{C^+})$ on $\mathcal{V}$. 
We notice that the line bundle $\mathcal{L} \to \mathcal{X}$ is well-defined since 
the line bundles $f_+^* \varphi^*(L^+|_{C^+})$ and $f_-^* \varphi^*(g_{\xi}^*(L^-|_{C^-}))$ are the same as the restrictions $\mathcal{L}|_{\mathcal{V}^{+}}$ and $\mathcal{L}|_{\mathcal{V}^{-}}$ respectively, 
by virtue of Proposition \ref{prop:linebundleonW} and the assumption $g_{\xi}^*(L^-|_{C^-}) \cong L^+|_{C^+}$. 
\qed

\subsection{Idea of proof of Theorem \ref{thm:main1} $(ii)$}
 
Let $X=X_s$ be a K3 surface obtained by gluing $M^+=M_s^+$ and $M^-=M_s^-$, and $L^{\pm}$ be an ample line bundle on $S^{\pm}$. 
In order to show Theorem \ref{thm:main1} $(ii)$, we will construct a $C^{\infty}$-Hermitian metric on $L:=L_s=L^+ \vee L^-$ with positive curvature in the following manner for fixed $0<R_1<R_2<R$: 

\begin{description}
\item[Step 1] Construct a $C^{\infty}$-Hermitian metric $h_{\pm}$ on $L^\pm$ such that: 
\begin{itemize}
\item[---] $h_{\pm}$ can be glued to define a $C^{\infty}$-Hermitian metric $h$ on $L$ (if $0<\abs{s}<\ve_0$), 
\item[---] the Chern curvature of $h_{\pm}$ is semi-positive: $\sqrt{-1}\Theta_{h_\pm}\geq 0$, 
\item[---] $\sqrt{-1}\Theta_{h_\pm} > 0$ holds on $S^{\pm} \setminus \set{\abs{w^{\pm}} \le R_1}$, and 
\item[---] $\sqrt{-1}\Theta_{h_\pm}(\partial/\partial z^{\pm},\partial/\partial z^{\pm}) > 0$ holds on $S^{\pm}$. 
\end{itemize}
\item[Step 2] Construct a $C^{\infty}$ function $\psi^{\pm}$ on $S^{\pm} \setminus C^{\pm}$ such that: 
\begin{itemize}
\item[---] $\psi^{\pm}$ can be glued to define a $C^{\infty}$ function $\psi$ on $X$, 
\item[---] $\psi^{\pm}$ is psh on $M^{\pm} \setminus \set{R_2 \le \abs{w^{\pm}} \le R}$: $\sqrt{-1} \partial \bar\partial \psi^{\pm}|_{ M^{\pm} \setminus \set{R_2 \le \abs{w^{\pm}} \le R}} \ge0$, 
\item[---] $\psi^{\pm}|_{W^{\pm}}$ depends only on $\abs{w^{\pm}}$, and 
\item[---] $\sqrt{-1}\partial \bar\partial \psi^{\pm}(\partial/\partial w^{\pm},\partial/\partial w^{\pm}) > 0$ holds on $\set{\abs{w^{\pm}}<R_2 }$. 
\end{itemize}
\item[Step 3] For $0<c\ll 1$, $h\cdot e^{-c\psi}$ is a desired metric on $L$ with positive Chern curvature $\sqrt{-1}\Theta_{h}+ c \sqrt{-1}\partial \bar\partial \psi >0$. 
\end{description}

In our construction, $h_\pm\cdot e^{-c\psi^\pm}$ is a $C^{\infty}$-Hermitian metric on $L^\pm|_{S^\pm\setminus C^\pm}$ with positive Chern curvature 
such that $h_\pm\cdot e^{-c\psi^\pm} \sim (\log |w^{\pm}|)^2$ as $w^{\pm} \to 0$. 
Moreover, $\omega^\pm :=\sqrt{-1}\Theta_{h_\pm} +c\sqrt{-1}\ddbar \psi^\pm \in c_1\left(L^\pm|_{S^\pm\setminus C^\pm}\right)$ gives a complete K\"ahler metric on $S^\pm\setminus C^\pm$, and 
on a neighborhood $\set{\abs{w^\pm}<\sqrt{\ve_0}R}$ of $C^\pm$, the form $\omega^\pm$ is expressed as 
\[
\omega^\pm|_{\set{\abs{w^\pm}<\sqrt{\ve_0}R}} = \frac{\pi (L^{\pm}\cdot C^{\pm})}{\mathrm{Im}\,\tau}\cdot \sqrt{-1}dz^\pm\wedge d\overline{z}^\pm + 2c\cdot \frac{\sqrt{-1}dw^\pm\wedge d\overline{w}^\pm}{\abs{w^\pm}^2}. 
\]

\subsection{Proof of Theorem \ref{thm:main1} $(ii)$}

Let $S$ be the blow-up of $\mathbb{P}^2$ at nine points, and $C \subset S$  be an elliptic curve in $|K_{S}^{-1}|$ such that 
$N_{C/S} \in {\rm Pic}^0(C)$ satisfies the Diophantine condition. Then Arnol'd's theorem says that there is an analytically linearizable neighborhood $W \subset S$ of $C$. 
By shrinking $W$ if necessary, we may assume that $W$ is isomorphic to $W_{\tau,(p,q)}^{R}$ for some $R>0$, $\tau \in \mathbb{H}$ and $(p,q) \in \mathbb{R}^2$ 
satisfying the Diophantine condition, and let $\pi : W \to C$ be the projection given in Section~\ref{sec:linebundles}. 

Let $L \in {\rm Pic }(S)$ be an ample line bundle, which implies that there exists $n \in \mathbb{N}$ such that $L^n\otimes [-C]$ is very ample, 
and let $g_1, g_2, \ldots, g_N$ be a basis of $H^0(S, L^n\otimes [-C])$, which are regarded as sections of $L^n$ with zeros along $C$. 
Then the {\it singular} Hermitian metric $h_L$ on $L$ is defined by 
\[
\langle \xi, \eta\rangle_{h_L, x} := \frac{\xi\cdot \overline{\eta}}{\left(\abs{g_1(x)}^2+\abs{g_2(x)}^2+\cdots + \abs{g_N(x)}^2\right)^{\frac{1}{n}}}, \quad\text{where}\ \ \xi, \eta\in L|_x. 
\]
The metric $h_L$ has a pole along $C$ and 
its restriction $h_L|_{S\setminus C}$ induces a $C^{\infty}$-metric on $S \setminus C$ with positive curvature form $\sqrt{-1}\Theta_{h_L}|_{S \setminus C}>0$. 
Moreover let $h_C$ be a $C^{\infty}$-metric on $L|_W$ satisfying $\sqrt{-1}\Theta_{h_C}= b \sqrt{-1}dz\wedge d\overline{z}$ for $b:=\pi (L\cdot C)/\mathrm{Im}\,\tau >0$. 

Fix $0<R_1 <R_2<R$. Then we define a metric $h$ on $L$ by 
\[
h^{-1} := \begin{cases}
{\rm Regularized Max} (h_L^{-1}, \ve\cdot \pi^*h_{C}^{-1}) & \text{on}\ W \\
h_L^{-1} & \text{on}\ S\setminus\overline{W}
\end{cases}
\]
where $\ve>0$ and ${\rm Regularized Max}\colon\mathbb{R}^2\to \mathbb{R}$ is the regularized maximum function (see \cite[Chapter~\romfigure{1}, Lemma~5.18]{D}). 
Note that, by choosing $\ve>0$ sufficiently small, one may assume that $h=h_L$ holds on $\set{[(z, w)]\in W\mid R_1< \abs{w}}$, which ensures the smoothness of $h$. 
Then $\sqrt{-1}\Theta_h\geq 0$, since the local weight function $\vp$ of $h$ satisfies
\[
\vp = {\rm Regularized Max}(\vp_L, \vp_C-\log\ve), 
\]
where $\vp_L$ and $\vp_C$ are the local weight functions of $h_L$ and $h_C$, respectively. 
By the construction of $h$, there exists a positive constant $\ve_0$ such that 
$h=\ve^{-1} \cdot \pi^*h_{C}$ holds on $\set{\abs{w}< \sqrt{\ve_0} R }$. 
By shrinking $\ve_0$ if necessary, we may assume $\sqrt{\ve_0} R<R_1$. 
For $s \in \Delta$ with $|s|< \ve_0$, let $\lambda=\lambda_s : \mathbb{R}_{>0} \to \mathbb{R}$ be a $C^{\infty}$-function satisfying the conditions
\[\begin{cases}
\lambda(t)=\left(\log (t^2/\abs{s}) \right)^2  &\text{if}\ 0<t<R_2,\\
\lambda(t)\equiv\text{constant}&\text{if}\ t \geq R,
\end{cases}\]
and $\psi=\psi_s : S \setminus C \to \mathbb{R}$ be the $C^{\infty}$-function defined by
\[
\psi(p):=
\begin{cases}
\lambda(\abs{w}) & \forall\, p=(z,w) \in W \setminus C \\
\lambda(R) & \forall\, p \notin W. 
\end{cases}
\]
It is easy to see that $\ddbar\psi=0$ outside $\set{ \abs{w} \le R }$ and 
$\ddbar \psi=2\cdot dw\wedge d\overline{w}/\abs{w}^2$
on $\set{0<\abs{w} < R_2}$. 
Finally, we choose $c>0$ so that 
\[
\sqrt{-1}\Theta_{h_L}+c\sqrt{-1}\ddbar \psi>0
\]
on the compact subset $\set{R_2 \le \abs{w} \le R}$. 
Here note that such a $c>0$ exists since $\sqrt{-1}\Theta_{h_L}$ is strictly positive on $\set{R_2 \le \abs{w} \le R} \subset S \setminus C$. 

We consider the metric $h \cdot e^{-c \psi}$ on $S \setminus C$. Our assumption on $c>0$ says that 
\[\sqrt{-1}\Theta_{h \cdot e^{-c \psi}} = \sqrt{-1} \Theta_{h_L} + c \sqrt{-1}\ddbar \psi>0\] outside $\set{\abs{w}<R_2}$. 
Moreover, $h \cdot e^{-c \psi}$ has positive curvature also on $\set{0<\abs{w}<R_2}$, since it holds 
\[
\sqrt{-1}\Theta_{h_L \cdot e^{-c \psi}} > \sqrt{-1} \Theta_{h_L}>0, \quad 
\sqrt{-1}\Theta_{\ve^{-1} \cdot \pi^*h_C \cdot e^{-c \psi}}=b \sqrt{-1} dz \wedge \overline{z}+ c \frac{\sqrt{-1}dw\wedge d\overline{w}}{\abs{w}^2}>0
\]
and 
\[
(h\cdot e^{-c \psi})^{-1} = {\rm Regularized Max} \left((h_L\cdot e^{-c \psi})^{-1}, (\ve^{-1}\cdot \pi^*h_{C}\cdot e^{-c \psi})^{-1} \right) 
\]
on $\set{0<\abs{w}<R_2}$ (see \cite[Chapter~\romfigure{1}, Lemma~5.18(e)]{D}). 
Therefore the curvature of $h \cdot e^{-c \psi}$ is positive on $S \setminus C$. 

Now we consider two pairs $(S^{\pm},C^{\pm})$ of surfaces $S^{\pm}$ and curves $C^{\pm} \subset S^{\pm}$ given in the introduction, which admit analytically linearizable neighborhoods $W^{\pm} \subset S^{\pm}$ of $C^{\pm}$, and assume that $W^{\pm}$ are regarded as subspaces $\set{[(z^{\pm},w^{\pm})] \mid \abs{w^{\pm}} <R}$ of  toroidal groups. 
Moreover let $L^{\pm}$ be ample line bundles with $(L^+\cdot C^+)=(L^-\cdot C^-)$ and $g_{\xi} : C^+ \to C^-$ be an isomorphism with $g_\xi^*\left(L^-|_{C^-}\right)=L^+|_{C^+}$. 
In what follows we abuse the notation to denote $g_\xi$ simply by $g$. 
Then the above argument shows that there exist $C^{\infty}$-metrics $h_{\pm} \cdot e^{-c \psi^{\pm}}$ on $S^{\pm} \setminus C^{\pm}$ such that 
$\sqrt{-1}\Theta_{h_{\pm} \cdot e^{-c \psi^{\pm}}}>0$ on $S^{\pm} \setminus C^{\pm}$ and 
\[
h_{\pm} =\ve^{-1} \cdot \pi_{\pm}^*h_{C_{\pm}}, \quad \psi^{\pm}(z^{\pm}, w^{\pm}) = \left(\log \frac{\abs{w^{\pm}}^2}{\abs{s}} \right)^2
\]
on $\set{0<\abs{w^{\pm}}<\sqrt{\abs{s}}R \ (<\sqrt{\ve_0} R<R_1)}$. 
As our K3 surface $X_s$ is given by gluing two surfaces
\[M_s^{\pm}=S^{\pm} \setminus \set{\abs{w^{\pm}} \le\sqrt{\abs{s}}/R }\]
via the map 
$(z^+, w^+) \mapsto (z^-, w^-)=(g(z^+), s/w^+)$, 
it follows from Proposition \ref{prop:linebundleonW} that $h_{\pm}$ can be glued together and become a global $C^{\infty}$-Hermitian metric on $L_s = L^+\vee L^-$. 
Moreover, on $\set{ \sqrt{\abs{s}}/R<\abs{w^+}<\sqrt{\abs{s}}R }$, we have 
$\psi^+(z^+, w^+) = \left(\log \abs{w^+}^2/\abs{s}\right)^2$ and 
\[
\psi^-\left(g(z^+), \frac{s}{w^+}\right) = \left(\log \frac{\abs{s/w^+}^2}{\abs{s}}\right)^2
= \left(-\log \frac{\abs{w^+}^2}{\abs{s}} \right)^2
= \psi^+(z^+, w^+) ,
\]
which means that $\psi^{\pm}$ can be glued together and become a global $C^{\infty}$-function $\psi$ on $X_s$. 
Therefore $h_{\pm} \cdot e^{-c \psi^{\pm}}$ yield a $C^{\infty}$-metric on $X_s$ with positive definite curvature form. 
\qed


\subsection{Proof of Theorem \ref{thm:main1} $(iii)$}
The equivalence (a) $\Longleftrightarrow$ (c) follows from Proposition~\ref{prop:extentability}, and the implications (a) $\Longrightarrow$ (b) follows from Theorem~\ref{thm:main1} (i). 
In what follows we show (b) $\Longrightarrow$ (c). 
Take a line bundle $\mathcal{L}\to \mathcal{X}$ as in (b) and consider the function 
$h\colon \Delta\to \mathbb{Z}$ defined by 
\[
h(t) := \left(\mathcal{L}|_{M_t^+}\cdot\ A_{\beta\gamma}\right), 
\]
where we are regarding $A_{\beta\gamma}$ as a cycle of $M_t^+$. 
As $\mathcal{M}^+\to \Delta$ is a submersion, $h$ is continuous. Thus $h$ is a constant function. 
Therefore, in order to show that $(L\cdot A_{\beta\gamma}) (=h(s))$ is equal to zero, it is sufficient to show that $h(0)=0$, which follows from Proposition \ref{prop:extentability} since $\mathcal{L}|_{M_0^+}$ coincides with the restriction of the line bundle $(\mathcal{L}|_{X_0})|_{S^+}$ to $M_0^+$. 
The equation $(L\cdot A_{\gamma\alpha})=0$ can be shown in the same manner. 
\qed

\subsection{Proof of Theorem \ref{thm:main}}

Our construction of K3 surfaces has $19$ complex dimensional degrees of freedom if we allow the variation of $\xi$ \cite{KU}. 
Indeed, for a fixed pair $(p,q) \in \mathbb{R}^2$ satisfying the Diophantine condition,  we have the following parameters: 
\begin{enumerate}[label=(\Roman*)]
\item $1$ parameter $\tau\in\mathbb{H}$ determining the elliptic curve $C^+ \cong C^-$, 
\item $16$ parameters $\{p_1^{\pm},\dots, p_8^{\pm}\}$ determining  the centers of the blow-ups $\pi^{\pm}$ (here $p_9^+$ and $p_9^-$ are fixed from the conditions (a) and (b) in the introduction), 
\item $1$ parameter $\xi \in \mathbb{C}$ determining the isomorphism $g_{\xi} : C^+ \to C^-$, and
\item $1$ parameter $s \in \Delta \setminus \{0\}$ determining the gluing function $f_s : V_s^+ \to V_s^-$. 
\end{enumerate}
Note that there always exist ample line bundles $L^\pm\to S^\pm$ with $(L^+\cdot C^+)=(L^-\cdot C^-)$. 
If such ample line bundles $L^{\pm}$ are fixed, then $\xi$ is determined uniquely up to modulo $\langle 1, \tau\rangle$ from the condition $g_{\xi}^*\left(L^-|_{C^-}\right) \cong L^+|_{C^+}$, 
and depends holomorphically on the parameters given in (\romfigure{1}) and (\romfigure{2}) (see also the relation \eqref{eqn:xifix}). 
Moreover, for any $s \in \Delta \setminus \{0\}$ with sufficiently small $\abs{s}\ll 1$, the K3 surface $X_s$ admits an ample line bundle $L_s =L^+ \vee L^-$ by Theorem \ref{thm:main1} $(ii)$. 
Hence we have an $18$ dimensional family of projective K3 surfaces, whose Kodaira-Spencer map is injective by \cite[Theorem 1.1]{KU}. Moreover it follows from \cite{KU} that there exists 
a holomorphic immersion $F_b\colon \mathbb{C}\to X_b$ mentioned in Theorem \ref{thm:main} (see also Remark \ref{rem:leviflat}). 
Finally among the family, almost every fiber is a non-Kummer K3 surface 
since if follows from Proposition \ref{prop:K3_deform_picard} that almost every fiber $X_s$ has  the Picard number $\rho(X_s) \le 2$. 
\qed

\section{Calculation of the Chern class $c_1(L)$} \label{sec:calculation}

Let $S^{\pm}$  be surfaces obtained from the blow-ups $\pi^{\pm} : S^{\pm} \to \mathbb{P}^2$ of 
the projective plane $\mathbb{P}^2$ at nine points $\{p_1^{\pm}, \ldots, p_9^{\pm}\}$ with smooth elliptic curves $C^{\pm} \in \abs{K_{S^\pm}^{-1}}$. 
In our assumption $(S^{\pm},C^{\pm})$ satisfy Conditions (a) and (b) given in the introduction. 
Moreover let $L^{\pm}$ be holomorphic line bundles on $S^{\pm}$. 
In this section, we compute the Chern class $c_1(L)$ in the lattice $H^2(X, \mathbb{Z})\cong H_2(X, \mathbb{Z})$, where $X$ is a K3 surface given by the gluing construction and $L = L^+ \vee L^-$ is the line bundle on $X$
(see the introduction). 

First we notice that the second homology group of $S^{\pm}$ is expressed as 
\[
H_2(S^{\pm}, \mathbb{Z}) \cong H^2(S^{\pm},\mathbb{Z}) \cong \mathrm{Pic}(S^{\pm})= \langle H^{\pm},E_1^{\pm},\ldots, E_9^{\pm} \rangle, 
\]
where $E_\nu^\pm$ is (the class of) the exceptional divisor in $S^\pm$ which is the preimage of $p_\nu^\pm$ for $\nu=1, 2, \ldots, 9$, 
and $H^\pm$ is (the class of) the preimage of a line in $\mathbb{P}^2$ by the blow-up $\pi^\pm\colon S^\pm\to \mathbb{P}^2$. 
In the homology group $H_2(S^{\pm}, \mathbb{Z})$, the elliptic curve $C^{\pm}$ is expressed as 
\[
C^{\pm}=3H^{\pm}-\sum_{j=1}^9 E_j^{\pm}. 
\]
We also notice that the points $p_1^{\pm}, \ldots, p_9^{\pm}$ lie in the elliptic curve $C_0^{\pm}:=\pi^{\pm}(C^{\pm})$. 
Then fix isomorphisms
\[
C_0^+ \cong C_0^- \cong \mathbb{C}/\langle 1,\tau \rangle
\]
and also fix an inflection point $p_0^{\pm}$ so that 
\[
9p_0^{\pm}-\sum_{j=1}^9 p_j^{\pm}= \pm \mu\quad \text{mod}\ \langle 1, \tau\rangle,
\]
where $\mu:= q-p \cdot \tau$ and the points $p_j^{\pm} \in \mathbb{C}$ $(j=0,\ldots,9)$ are regarded as complex numbers (see Subsection~\ref{subsec:nbhdellcurve}). 
By choosing the complex number corresponding to the point $p_0^\pm$ appropriately, we may assume that 
\begin{equation} \label{eqn:torrelation}
9p_0^{\pm}-\sum_{j=1}^9 p_j^{\pm}= \pm \mu,
\end{equation}
actually holds. 
In what follows we assume that $g(p_0^+)=p_0^-$ by changing $g$ if necessary. 
For $j \neq k \in \set{0,1,\ldots,9}$, let $\Gamma_{jk}^{\pm} \subset C^{\pm}$ be the lift of an arc in $C^\pm_0$ connecting $p_{j}^{\pm}$ and $p_{k}^{\pm}$. 

Now we give the definitions of the generators \eqref{eqn:generator} (see also \cite{KU}). 
The $2$-cycles $A_{\alpha \beta}$, $A_{\beta \gamma}$, $A_{\gamma \alpha}$ 
are already defined in the introduction. 
In order to define the 2-cycle $B_{\bullet}$ for $\bullet\in\set{\alpha, \beta, \gamma}$, we first notice that $M_s^{\pm}$ are simply connected. 
Thus, there exist topological discs $D_{\bullet}^{\pm} \subset M_s^{\pm}$ such that 
$\partial D_{\bullet}^{\pm}=\pm \bS_{\bullet}^1$ hold, 
where $\bS_{\bullet}^1 \subset V_s$, which are given in the introduction, are regarded as $1$-cycles of $V_s^{\pm} \subset M_{s}^{\pm}$. 
Then $B_{\bullet}$ is defined by $B_{\bullet}=D_{\bullet}^+ \cup_{\bS_{\bullet}^1} (-D_{\bullet}^-)$, 
that is, the patch of $D_{\bullet}^+$ and $-D_{\bullet}^-$ through $\bS_{\bullet}^1$. 
In order to define the $2$-cycles $C_{\bullet}^{\pm}$, we prepare the tube $T_{jk}^{\pm}$ given by 
$T_{jk}^{\pm}:=\mathrm{pr}_{\pm}^{-1}(\Gamma_{jk}^{\pm}) \subset \set{\abs{w^{\pm}}=\sqrt{\abs{s}}}$, where 
\[
\mathrm{pr}_{\pm} : \set{[(z^\pm, w^\pm)]\in W^\pm\mid \abs{w^\pm}= \sqrt{\abs{s}}} \ni [(z^\pm, w^\pm)] \longmapsto [z^{\pm}] \in C^{\pm}
\]
is a natural projection. Then for $\nu=1,\ldots,7$, the $2$-cycle $C_{\nu, \nu+1}^{\pm}$ is defined by the connected sum 
$(\pm E_{\nu}^{\pm}) \# (\mp E_{\nu+1}^{\pm})$ of 
$\pm E_{\nu}^{\pm}$ and $\mp E_{\nu+1}^{\pm}$ given by connecting them through the tube $T_{\nu, \nu+1}^{\pm}$. In a similar manner, the $2$-cycle $C_{678}^{\pm}$ is defined by the connected sum 
\[C_{678}^{\pm}:=(\mp H^{\pm}) \# (\pm E_{6}^{\pm}) \# (\pm E_{7}^{\pm}) \# (\pm E_{8}^{\pm})\]
of $\mp H^{\pm}$, $\pm E_{6}^{\pm}$, $\pm E_{7}^{\pm}$, $\pm E_{8}^{\pm}$ given by connecting them through the tubes 
$T_{0 6}^{\pm}$, $T_{0 7}^{\pm}$, $T_{0 8}^{\pm}$. 
In particular, $C_{\bullet}^{\pm}$ is represented as
\[
C_{12}^{\pm}=\pm(E_1^{\pm}-E_2^{\pm}),\  \ldots , \ C_{78}^{\pm}=\pm(E_7^{\pm}-E_8^{\pm}), 
\ C_{678}^{\pm}=\pm(-H^{\pm}+E_6^{\pm}+E_7^{\pm}+E_8^{\pm})
\]
in $H_2(S^{\pm}, \mathbb{Z})$.
It should be noted that $C_{\bullet}^{\pm}$ lies in $M_s^{\pm}$. 
Moreover, $H_2(S^{\pm}, \mathbb{Z})$ admits an orthogonal decomposition
\[
H_2(S^{\pm}, \mathbb{C}) = \langle C^{\pm}, E_9^{\pm} \rangle \oplus \mathcal{C}^{\pm}
\]
with respect to the intersection product, where $\mathcal{C}^\pm$ is given by $\mathcal{C}^\pm:=\langle C^\pm_{12}, C^\pm_{23},\ldots,C^\pm_{78},C^\pm_{678}\rangle$, 
and any element $q^{\pm} \in H_2(S^{\pm}, \mathbb{C})=H_2(S^{\pm}, \mathbb{Z}) \otimes \mathbb{C}$ admits an expression
\begin{equation}\label{eqn:q_expression}
q^{\pm}=q_0^{\pm} H^{\pm}-\sum_{j=1}^9 q_j^{\pm} E_j^{\pm} = \Bigl(3q_0^{\pm}-\sum_{j=1}^8 q_j^{\pm}\Bigr) C^{\pm} 
+ \Bigl(3q_0^{\pm}-\sum_{j=1}^9 q_j^{\pm}\Bigr) E_9^{\pm}  + q^{\pm}|_{\mathcal{C}^{\pm}}. 
\end{equation}

Next let us consider a K3 surface $X=X_s$ given by the gluing construction. 
It is seen (\emph{cf.} \cite{KU}) that the second homology group of $X$ is given by the orthogonal decomposition
\[
H_2(X, \mathbb{Z})=\Pi_{3,19} \cong \langle A_{\alpha\beta}, B_\gamma\rangle\oplus \langle A_{\beta\gamma}, B_\alpha\rangle\oplus \langle A_{\gamma\alpha}, B_\beta\rangle\oplus \mathcal{C}^+ \oplus \mathcal{C}^-. 
\]
with respect to the intersection product. Here note that
\begin{align*}
&(A_{\alpha\beta}\cdot A_{\alpha\beta})=(A_{\beta\gamma}\cdot A_{\beta\gamma})=(A_{\gamma\alpha}\cdot A_{\gamma\alpha})=0, \\
&(B_\gamma\cdot B_\gamma)=(B_\alpha\cdot B_\alpha)=(B_\beta\cdot B_\beta)=-2, \\
&(A_{\alpha\beta}\cdot B_\gamma)= (A_{\beta\gamma}\cdot B_\alpha)=( A_{\gamma\alpha}\cdot B_\beta)=1. 
\end{align*}
The K3 surface $X$ admits a nowhere vanishing holomorphic $2$-form $\sigma$, which is expressed as
\[
\sigma = (2 \mu + c_9^-)A_{\alpha\beta} + \mu B_\gamma
+xA_{\beta\gamma} + \tau B_\alpha
+yA_{\gamma\alpha} + B_\beta
+\sum c^+_\bullet C^+_\bullet 
+\sum c^-_\bullet C^-_\bullet 
\]
in $H_2(X,\mathbb{C})$ by multiplying a constant to $\sigma$ if necessary, 
where $x=x(s)$ and $y=y(s)$ are constants and $c_{\bullet}^{\pm}$ is given by
\[
c_{\bullet}^{\pm}=\int_{\Gamma_{\bullet}^{\pm}} dz^{\pm}
\]
with $\Gamma_9^- \subset C^{-}$ being the lift of an arc in $C^-_0$ connecting $p_9^-$ and $g_{\xi}(p_9^+)$. 
Hence, one has
\begin{equation} \label{eqn:coeffcientc}
c^{\pm}_{12} =\pm(p_1^{\pm}-p_2^{\pm}), \ \ldots ,\  c^{\pm}_{78} =\pm(p_7^{\pm}-p_8^{\pm}), \ c^{\pm}_{678} =\pm(-3 p_0^{\pm}+p_6^{\pm}+p_7^{\pm}+p_8^{\pm}), \ \ \text{and}\ \ c_9^-=g_{\xi}(p_9^+)-p_9^- 
\end{equation}
if one selects the arcs appropriately. 
\begin{proposition} \label{prop:charnexpression}
The Chern class $c_1(L)$ in $H^2(X, \mathbb{Z})\cong H_2(X, \mathbb{Z})$ is expressed as 
\[
c_1(L) = (2b+n_9^++n_9^-)A_{\alpha\beta} + b B_\gamma
+L^+|_{\mathcal{C}^+}+L^-|_{\mathcal{C}^-},
\]
where $b:=(L^+\cdot C^+) =(L^-\cdot C^-)$ and $n_9^{\pm}:=(L^{\pm}\cdot E_9^{\pm})$. 
\end{proposition}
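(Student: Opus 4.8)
The plan is to determine $c_1(L)$ from its intersection numbers against the $22$ marking generators, which suffices because $H_2(X,\mathbb{Z})\cong\Pi_{3,19}$ is unimodular and splits orthogonally as $\langle A_{\alpha\beta},B_\gamma\rangle\oplus\langle A_{\beta\gamma},B_\alpha\rangle\oplus\langle A_{\gamma\alpha},B_\beta\rangle\oplus\mathcal{C}^+\oplus\mathcal{C}^-$. Writing $c_1(L)=e_1A_{\alpha\beta}+f_1B_\gamma+e_2A_{\beta\gamma}+f_2B_\alpha+e_3A_{\gamma\alpha}+f_3B_\beta+\xi^++\xi^-$ with $\xi^\pm\in\mathcal{C}^\pm$, the relations $A_{\alpha\beta}^2=A_{\beta\gamma}^2=A_{\gamma\alpha}^2=0$, $B_\bullet^2=-2$ and $A_{\alpha\beta}\cdot B_\gamma=A_{\beta\gamma}\cdot B_\alpha=A_{\gamma\alpha}\cdot B_\beta=1$ give $f_1=(L\cdot A_{\alpha\beta})$ and $e_1=(L\cdot B_\gamma)+2f_1$, and likewise $f_2=(L\cdot A_{\beta\gamma})$, $e_2=(L\cdot B_\alpha)+2f_2$, $f_3=(L\cdot A_{\gamma\alpha})$, $e_3=(L\cdot B_\beta)+2f_3$, while each $\xi^\pm$ is read off from the numbers $(L\cdot C_\bullet^\pm)$. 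Thus everything reduces to six intersection numbers and the two $\mathcal{C}^\pm$-projections.

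I would dispatch the cheap inputs first. Since each $C_\bullet^\pm$ lies in $M_s^\pm$ and $c_1(L)$ restricts there to $c_1(L^\pm|_{M_s^\pm})$, we have $(L\cdot C_\bullet^\pm)=(L^\pm\cdot C_\bullet^\pm)$ computed on $S^\pm$; as $\mathcal{C}^\pm\perp\langle C^\pm,E_9^\pm\rangle$ and the form is nondegenerate on $\mathcal{C}^\pm$, this forces $\xi^\pm=L^\pm|_{\mathcal{C}^\pm}$, the last two terms of the asserted formula. Next, because $L^\pm|_{W^\pm}=\pi^*(L^\pm|_{C^\pm})$ by Proposition~\ref{prop:linebundleonW}, the bundle $L^\pm|_{V_s^\pm}$ extends over $W^\pm$, so Proposition~\ref{prop:extentability} (equivalently Theorem~\ref{thm:main1}(iii)) gives $(L\cdot A_{\beta\gamma})=(L\cdot A_{\gamma\alpha})=0$, i.e.\ $f_2=f_3=0$. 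Finally $A_{\alpha\beta}=\bS_\alpha^1\times\bS_\beta^1$ is a $C^\infty$ section of the annulus bundle $V_s\to C$ and hence represents the class $[C^+]\in H_2(S^+,\mathbb{Z})$; therefore $(L\cdot A_{\alpha\beta})=(L^+\cdot C^+)=b$, giving $f_1=b$.

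The substantive step is to evaluate $(L\cdot B_\gamma)$, $(L\cdot B_\alpha)$ and $(L\cdot B_\beta)$, which control $e_1,e_2,e_3$. Here I would use that $B_\bullet=D_\bullet^+\cup_{\bS_\bullet^1}(-D_\bullet^-)$ with $D_\bullet^\pm\subset M_s^\pm$, so that $(L\cdot B_\bullet)=\int_{D_\bullet^+}c_1(L^+)-\int_{D_\bullet^-}c_1(L^-)$, and then cap each disc by a chain lying in $W^\pm$: for $\bullet=\gamma$ the fibre circle $\bS_\gamma^1$ bounds a fibre disc collapsing to a point of $C^\pm$, whereas for $\bullet\in\{\alpha,\beta\}$ it cobounds, through a radial annulus in $W^\pm$, the corresponding loop on the zero section $C^\pm$. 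Since $c_1(L^\pm)|_{W^\pm}=\pi^*c_1(L^\pm|_{C^\pm})$ is pulled back from the one-dimensional $C^\pm$, it integrates to zero over every such capping chain; hence $\int_{D_\bullet^\pm}c_1(L^\pm)$ equals $\int_{\Sigma_\bullet^\pm}c_1(L^\pm)$ for the resulting $2$-chain $\Sigma_\bullet^\pm\subset S^\pm$, which is a closed surface when $\bullet=\gamma$. For $\bullet=\gamma$ the capped sphere $\Sigma_\gamma^\pm$ meets $C^\pm$ exactly once and, after imposing the normalizations $A_{\alpha\beta}\cdot B_\gamma=1$ and $B_\gamma^2=-2$ that fix the marking, represents $\pm E_9^\pm$ (note $E_9^\pm\cdot C^\pm=1$ and $(E_9^\pm)^2=-1$), so $(L\cdot B_\gamma)=n_9^++n_9^-$ and $e_1=2b+n_9^++n_9^-$. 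For $\bullet\in\{\alpha,\beta\}$ the two capping annuli join the chains $\Sigma_\bullet^\pm$ along the identified loops $g(\alpha^+)=\alpha^-$, $g(\beta^+)=\beta^-$ into a closed cycle of the central fibre $X_0=S^+\cup_CS^-$ on which $c_1(\mathcal{L}|_{X_0})$ pairs to zero, giving $(L\cdot B_\alpha)=(L\cdot B_\beta)=0$ and hence $e_2=e_3=0$; this last vanishing can alternatively be cross-checked against $(c_1(L)\cdot\sigma)=(c_1(L)\cdot\overline{\sigma})=0$.

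I expect the crux to be exactly the homological identifications in the third step: pinning down $\Sigma_\gamma^\pm=\pm E_9^\pm$ (rather than $E_9^\pm+aC^\pm$ for some $a$), and the vanishing for $\alpha,\beta$, demand care with the \cite{KU} definitions of the discs $D_\bullet^\pm$ and of the generators $B_\bullet$, with the orientation and sign conventions hidden in the connected-sum and capping constructions, and with the passage from integrals of $c_1(L^\pm)$ over $2$-chains to intersection numbers on $S^\pm$ and on the normal-crossing fibre $X_0$. Once these are fixed, assembling $f_1=b$, $e_1=2b+n_9^++n_9^-$, $f_2=f_3=e_2=e_3=0$ and $\xi^\pm=L^\pm|_{\mathcal{C}^\pm}$ into the stated expression is immediate.
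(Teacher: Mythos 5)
Your treatment of the ``cheap'' coefficients coincides with the paper's: $\widehat{c}^{\pm}_\bullet$ from the fact that $C_\bullet^\pm\subset M_s^\pm$, the vanishing of the $B_\alpha$- and $B_\beta$-coefficients from Theorem~\ref{thm:main1}(iii), and the $B_\gamma$-coefficient $b$ from $A_{\alpha\beta}\sim C^\pm$. The gap is in your third step, which is precisely where all the content lies. Your evaluation of $(L\cdot B_\gamma)$ rests on the identification $\Sigma_\gamma^\pm=\pm E_9^\pm$ in $H_2(S^\pm,\mathbb{Z})$, and the justification you offer --- that the normalizations $(A_{\alpha\beta}\cdot B_\gamma)=1$ and $(B_\gamma\cdot B_\gamma)=-2$ ``fix the marking'' --- does not suffice. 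The disc $D_\gamma^\pm$ is only determined up to adding a closed $2$-cycle of $M_s^\pm$, and the image of $H_2(M_s^\pm)$ in $H_2(S^\pm)$ lies in $(C^\pm)^\perp=\langle C^\pm\rangle\oplus\mathcal{C}^\pm$; replacing $\Sigma_\gamma^+$ by $E_9^++aC^++\eta$ with $\eta\in\mathcal{C}^+$ leaves the intersection with $C^+$ equal to $1$ and does not disturb $(A_{\alpha\beta}\cdot B_\gamma)$, yet changes $(L^+\cdot\Sigma_\gamma^+)$ by $ab+(L^+\cdot\eta)$. So the equality $(L\cdot B_\gamma)=n_9^++n_9^-$ genuinely depends on the specific choice of $D_\gamma^\pm$ made in \cite{KU}, which you invoke but never pin down. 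The same criticism applies to your claim that the capped $\alpha$- and $\beta$-chains assemble into a cycle of $X_0$ on which $c_1(\mathcal{L}|_{X_0})$ pairs to zero: the individual integrals over the chains in $S^\pm$ are not well defined, and no reason is given why the total vanishes. You correctly flag these identifications as ``the crux,'' but flagging them is not proving them.

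The paper closes exactly this gap by a different mechanism, the one you mention only as a cross-check: since $c_1(L)$ is of type $(1,1)$, one has $(\sigma\cdot L)=0$, and substituting the explicit period expression of $\sigma$ in the marking (imported from \cite{KU}, with the coefficient $c_9^-$ controlled by the normalization \eqref{eqn:xifix} of $\xi$) together with the relations \eqref{eqn:subrel1}--\eqref{eqn:subrel2} yields the single equation $\mu\widehat{a}_{\alpha\beta}+\tau\widehat{a}_{\beta\gamma}+\widehat{a}_{\gamma\alpha}=(2b+n_9^++n_9^-)\mu$. The $\mathbb{Q}$-linear independence of $1,\tau,\mu$, which follows from the Diophantine condition on $(p,q)$, then forces all three $A$-coefficients simultaneously, with no need to locate any capping chain in $H_2(S^\pm)$. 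If you want to salvage your direct topological route, you must either reproduce the precise definitions of $D_\bullet^\pm$ from \cite{KU} and track the resulting classes, or replace that step by the $(\sigma\cdot L)=0$ argument; as written, the key step of your proof is not established.
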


\begin{proof} We put
\[
c_1(L) = \widehat{a}_{\alpha\beta}A_{\alpha\beta} + \widehat{b}_\gamma B_\gamma
+\widehat{a}_{\beta\gamma}A_{\beta\gamma} + \widehat{b}_\alpha B_\alpha
+\widehat{a}_{\gamma\alpha}A_{\gamma\alpha} + \widehat{b}_\beta B_\beta
+\sum\widehat{c}^+_\bullet C^+_\bullet 
+\sum\widehat{c}^-_\bullet C^-_\bullet. 
\]
First the coefficients $\widehat{c}^{\pm}_\bullet$ are determined from $L^{\pm}|_{\mathcal{C}^{\pm}}$ since the cycles $C^{\pm}_\bullet$ in $X_s$ are also regarded as the ones 
in $M_s^{\pm} \subset S^{\pm}$. Next it follows from Theorem \ref{thm:main1} $(iii)$ that $(L\cdot A_{\beta\gamma})=(L\cdot A_{\gamma\alpha})=0$, which implies that $\widehat{b}_\alpha=\widehat{b}_\beta=0$. 
Moreover, the cycle $A_{\alpha\beta}$ may be regarded as $C^{\pm}$ in $S^{\pm}$, which means that $(L\cdot A_{\alpha\beta})=(L^\pm\cdot C^\pm)=b$ and thus 
$\widehat{b}_\gamma = b$. 

Finally we will determine the coefficients $\widehat{a}_\bullet$. To this end, we put 
\[
p^{\pm}:= 3 p_0^{\pm}H^{\pm}- \sum_{j=1}^9 p_j^{\pm} E_j^{\pm} \in H_2(S^{\pm}, \mathbb{C}). 
\]
Then Condition \eqref{eqn:torrelation} shows that $(p^{\pm}\cdot C^{\pm})=\pm \mu$
and the relation \eqref{eqn:coeffcientc} means that  $\sigma|_{\mathcal{C}^{\pm}}= \pm p^{\pm}|_{\mathcal{C}^{\pm}}$. 
Thus it follows from Equation \eqref{eqn:q_expression} that 
\begin{equation} \label{eqn:pexpression} 
p^{\pm}=  \left(9p_0^{\pm}-\sum_{j=1}^8 p_j^{\pm}\right) C^{\pm} + \left(9p_0^{\pm}-\sum_{j=1}^9 p_j^{\pm}\right) E_9^{\pm} + p^{\pm}|_{\mathcal{C^{\pm}}} 
=  (\pm \mu + p_9^{\pm}) C^{\pm} \pm \mu E_9^{\pm} \pm \sigma|_{\mathcal{C}^{\pm}}. 
\end{equation} 
Moreover, 
by the condition $g_\xi^{*}\left(L^-|_{C^-}\right)=L^+|_{C^+}$, we may assume that $\xi \in \mathbb{C}$ is given by  
\begin{equation} \label{eqn:xifix}
\xi = \frac{1}{b} \left((p^-\cdot L^-)-(p^+\cdot L^+)\right). 
\end{equation}
Thus we have
\begin{equation} \label{eqn:subrel1}
c_9^-=g_{\xi}(p_9^+)-p_9^-=(p_9^++\xi)-p_9^-=\frac{1}{b} \left((p^-\cdot L^-)-(p^+\cdot L^+)\right)+(p_9^+-p_9^-). 
\end{equation}
Equation \eqref{eqn:pexpression} shows that
\begin{equation} \label{eqn:subrel2}
(\sigma|_{\mathcal{C}^{\pm}}\cdot L^{\pm})= \pm (p^{\pm}\cdot L^{\pm}) - (\mu {\pm} p_9^{\pm})(C^{\pm}\cdot L^{\pm}) - \mu(E_9^{\pm}\cdot L^{\pm})
= \pm (p^{\pm}\cdot L^{\pm}) {\mp} b p_9^{\pm} -b \mu  -n_9^{\pm} \mu, 
\end{equation}
and Equations \eqref{eqn:subrel1} and \eqref{eqn:subrel2} show that 
\begin{align*}
0 = (\sigma.\cdot L) 
&=\left(((2 \mu + c_9^-)A_{\alpha\beta} + \mu B_\gamma)\cdot (\widehat{a}_{\alpha\beta}A_{\alpha\beta} + b B_\gamma)\right)+\left((xA_{\beta\gamma} + \tau B_\alpha)\cdot \widehat{a}_{\beta\gamma}A_{\beta\gamma}\right)\\
&\qquad +\left((yA_{\gamma\alpha} + B_\beta)\cdot \widehat{a}_{\gamma\alpha}A_{\gamma\alpha}\right)
+\left(\sigma|_{\mathcal{C}^+}\cdot L^+\right)+\left(\sigma|_{\mathcal{C}^-}\cdot L^-\right)\\
&= \left(\mu\widehat{a}_{\alpha\beta}+\tau \widehat{a}_{\beta\gamma}+\widehat{a}_{\gamma\alpha}\right)+b c_9^-
+(\sigma|_{\mathcal{C}^+}\cdot L^+)+(\sigma|_{\mathcal{C}^-}\cdot L^-)
=  \left(\mu\widehat{a}_{\alpha\beta}+\tau \widehat{a}_{\beta\gamma}+\widehat{a}_{\gamma\alpha}\right) - (2b+n_9^++n_9^-) \mu. 
\end{align*}
Since $\widehat{a}_{\alpha\beta}, \widehat{a}_{\beta\gamma}, \widehat{a}_{\gamma\alpha}, b, n_9^+,n_9^-\in \mathbb{Z}$ and $(1,\tau,\mu)$ are 
independent over $\mathbb{Q}$ by the Diophantine condition for the pair $(p, q)$, we have $\widehat{a}_{\alpha\beta}=2b+n_9^++n_9^-$ and $\widehat{a}_{\beta\gamma}=\widehat{a}_{\gamma\alpha}=0$. 
\end{proof}


\end{document}